\documentclass[10pt,reqno]{amsart}
\usepackage[utf8]{inputenc}
\usepackage[english]{babel}
\usepackage{amsmath, amssymb, amsthm}
\usepackage{mathrsfs}
\usepackage{enumitem}
\usepackage{graphicx}
\usepackage{tikz-cd}
\usepackage{xcolor}
\numberwithin{equation}{section} 

\usepackage{diagbox} 
\usepackage{tensor}  

\usepackage{hyperref} 

\usepackage[alphabetic]{amsrefs} 

\theoremstyle{plain}
\newtheorem{thm}{Theorem}[section]

\newtheorem{lemma}[thm]{Lemma}

\theoremstyle{definition}
\newtheorem{defin}[thm]{Definition}

\newtheorem{rmk}[thm]{Remark}

\allowdisplaybreaks

\newcommand{\N}{\mathbb{N}}

\newcommand{\R}{\mathbb{R}}

\newcommand{\abs}[1]{\left|#1 \right|}
\newcommand{\norm}[1]{\left\lVert#1\right\rVert}
\DeclareMathOperator{\supp}{supp}

\newcommand{\eps}{\varepsilon}

\newcommand{\MP}{\mathcal{MP}}

\def\XXint#1#2#3{{\setbox0=\hbox{$#1{#2#3}{\int}$ }
\vcenter{\hbox{$#2#3$ }}\kern-.6\wd0}}

\usepackage{scalerel,stackengine}
\stackMath
\newcommand\hhat[1]{%
\savestack{\tmpbox}{\stretchto{%
  \scaleto{%
    \scalerel*[\widthof{\ensuremath{#1}}]{\kern.1pt\mathchar"0362\kern.1pt}%
    {\rule{0ex}{\textheight}}
  }{\textheight}%
}{2.4ex}}%
\stackon[-6.9pt]{#1}{\tmpbox}%
}

\DeclareMathOperator{\Op}{Op}
\DeclareMathOperator{\wf}{WF}

\title[(In)stability results for the Light Ray Transform]{(In)stability results for the Light Ray Transform}

\author{Leonard Busch}
\address{The School of Mathematics \& Statistics, The University of Melbourne, Melbourne, Australia}
\email{lbusch@unimelb.edu.au}

\author{Sebastián Muñoz-Thon}
\address{Universit\'e Paris-Saclay, Laboratoire de math\'ematiques d’Orsay, 91405, Orsay, France.}
\email{sebastian.munoz-thon@universite-paris-saclay.fr}

\author{Lauri Oksanen}
\address{Department of Mathematics and Statistics, University of Helsinki, PO BOX 68, 00014, HELSINKI, Finland.}
\email{lauri.oksanen@helsinki.fi}

\begin{document}

\begin{abstract}
We prove quantitative instability results for the light ray transform in the Minkowski setting, and locally in the smooth and Gevrey categories. In the smooth Sobolev setting, we show that no modulus of continuity can be better than $t^\alpha$ for any $\alpha \in(0,1)$, while in the Gevrey setting we obtain an explicit logarithmic lower bound. On the other hand, using the relation between stationary geometries and magnetic(-potential) systems, we obtain a stability result for ``time moments'' of the light ray transform.
\end{abstract}

\maketitle

\section{Introduction}

Besides its medical applications, the geodesic X-ray transform appears in geometric inverse problems as the linearization of the boundary rigidity problem \cite{PSU23}. Its stability and instability properties are well understood, see for instance \cites{SU04, PSU15, AS20, PS21, IP22, PSU23}. One of the analogs in the Lorentzian setting is the light ray transform, which besides appearing after linearizing the corresponding geometric inverse problem, also appears naturally in the context of the recovery of coefficients in wave equations. This operator integrates functions (or tensors) over light ray geodesics. It has been recently studied in \cites{Stefanov17, FIO21, Stefanov 24, OPS25, Wang21}. Some stability results appear in \cites{VW21, Wang 22}, where the authors obtain such results for solutions of wave equations. There is also a (log-)instability result implicitly in \cite{Aicha}*{Section 3.2}, as a key step in the proof of instability for the recovery of time dependent coefficients from the Dirichlet-to-Neumann map for wave equations in the flat case. However, the Lorentzian problem is far from being completely understood. 

In this work, we study the behavior of the light ray transform. Let us begin by stating one such result. We obtain quantitative instability for this operator in the smooth and the Gevrey category. For simplicity, in the introduction we will only deal with the flat case, i.e., the Minkowski spacetime. For the results for general smooth and Gevrey spacetimes, see \S\ref{section:smooth},\ref{section:gevrey}. Hence, let us consider for now the Minkowski space $(M,g)$, where $M=\R^{1+n}$, and $g=-dt^{2}+(dx^{1})^{2}+\cdots+(dx^{n})^{2}$. Consider $\kappa \in C^{\infty}(\R^{1+n} \times S^{n-1})$ non-vanishing. A pair $(z,\theta) \in \mathcal{M}=\R_{z}^{n} \times S_{\theta}^{n-1}$ defines a lightlike geodesic $s \mapsto (s,z+s\theta)$ ($s \in \R$). For $f \in \mathcal{E}'(\R^{1+n})$, we define the (weighted) light ray transform
\begin{equation} \label{eq:lightray}
    L_{\kappa}f(z,\theta)=\int_{\R}\kappa(s,z+s\theta,\theta)f(s,z+s\theta)ds.
\end{equation}
We let $H_{S}^{s}(M)$ denote the Sobolev space of order $s$ with functions supported in $S \subset M$, and $G^{\sigma}(M)$ denotes the Gevrey space of order $\sigma$, we refer to \S\ref{section:prelim} and Definition \ref{defin:localized_spaces} for further details. In \eqref{eq:lightray}, $\kappa$ will be smooth or Gevrey depending on the context, and positively homogeneous of degree $1$ in the fiber variable. Fix $R>0$ and set
\begin{equation} \label{eq:compacts}
    S=\overline{B_{R}(0)}\subset \mathbb R^{1+n}, \qquad Q=\overline{B_{\sqrt{2}R}^{n}(0)}\times S^{n-1} \subset \mathbb R^n\times S^{n-1}.
\end{equation}
We obtain the following instability estimate:
\begin{thm} \label{thm:instability}
    Let $r,\delta>0$, take $S$ and $Q$ as in \eqref{eq:compacts}, and define $K_r = \{f\in H^\delta_S(\R^{1+n})\colon \norm{f}_{H^{\delta}} \leq r\}$.
    \begin{enumerate}
        \item If $\omega$ is a modulus of continuity such that  
        \[
        \|f_{1}-f_{2}\|_{L^2(\R^{1+n})} \leq \omega (\|L_{\kappa} f_{1}-L_{\kappa} f_{2}\|_{L^{2}(\R^{n} \times S^{n-1})} ), \quad f_{1},f_{2} \in K_{r},
        \]
        then for every $\alpha\in(0,1)$ one has $\omega(t)\gtrsim t^\alpha$ for $t$ small.
        \item For $\sigma>1$, suppose that $\kappa \in G^\sigma(\R^{1+n} \times S^{n-1})$ and that $\omega$ is a modulus of continuity so that
        \[
        \norm{f_1-f_2}_{L^2(\R^{1+n})} \leq \omega(\norm{L_\kappa f_1 - L_\kappa f_2}_{L^2(\R^n\times S^{n-1})})\,,\quad f_1,f_2 \in K_r\,.
        \]
        For $t$ small we have $\omega(t) \gtrsim|\log t|^{-\frac{\delta((2n-1)\sigma+1)}{n+1}}$.
    \end{enumerate}
\end{thm}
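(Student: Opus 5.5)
Both parts will follow the usual route to instability: build a family of functions that stays in $K_r$, is not small in $L^2$, and yet has exponentially or polynomially small light ray transform. Testing the assumed estimate on pairs $f_1=f$, $f_2=0$ from this family then forces a lower bound on $\omega$. The mechanism is microlocal. $L_\kappa$ only sees the wave front set near the light cone $\{\tau^2=|\xi|^2\}$, in the sense that $L_\kappa$ is smoothing (resp.\ Gevrey smoothing) on covectors that are timelike, i.e.\ not conormal to any light ray. So I will take $f_\lambda(t,x)=\lambda^{-\delta}\chi(t,x)e^{i\lambda t}$, with $\chi\in C_c^\infty(\operatorname{int}S)$ for part (1) and $\chi\in G^{\sigma'}_c$ with $1<\sigma'<\sigma$ (or a suitable Gevrey cutoff) for part (2). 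The frequency $(\tau,\xi)=(\lambda,0)$ is timelike, hence never conormal to a lightlike line.

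\emph{Step 1 (size of $f_\lambda$).} We have $\|f_\lambda\|_{H^\delta}\sim 1$ and $\|f_\lambda\|_{L^2}\sim\lambda^{-\delta}$, after scaling $\chi$ so that $f_\lambda\in K_r$.

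\emph{Step 2 (size of $L_\kappa f_\lambda$), smooth case.} We write
\[
L_\kappa f_\lambda(z,\theta)=\lambda^{-\delta}\int \kappa\,\chi(s,z+s\theta)\,e^{i\lambda s}\,ds.
\]
The phase $s\mapsto\lambda s$ has nonvanishing derivative, and the amplitude is smooth and compactly supported uniformly in $(z,\theta)\in Q$. Repeated integration by parts in $s$ therefore gives $|L_\kappa f_\lambda|\le C_N\lambda^{-N}$. If $(z,\theta)\notin Q$ the line misses $S$ and $L_\kappa f_\lambda=0$. This is exactly why $Q$ uses radius $\sqrt2R$: such lines hit $S$ only if $|z|\le\sqrt2 R$. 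Hence $\|L_\kappa f_\lambda\|_{L^2}\le C_N\lambda^{-\delta-N}$. Setting $t=C_N\lambda^{-\delta-N}$ gives
\[
\omega(t)\ge\|f_\lambda\|_{L^2}\sim t^{\delta/(\delta+N)},
\]
and choosing $N$ large makes the exponent $\delta/(\delta+N)$ any $\alpha\in(0,1)$. Monotonicity of $\omega$ handles the values of $t$ between $\lambda$-steps.

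\emph{Step 3, Gevrey case.} The same integral is now estimated with Gevrey cutoffs. Integrating by parts $N$ times gives
\[
|L_\kappa f_\lambda|\lesssim\lambda^{-\delta}C^N N!^{\sigma}\lambda^{-N},
\]
and optimizing in $N$ yields $\exp(-c\lambda^{1/\sigma})$. This alone would give $\omega(t)\gtrsim|\log t|^{-\delta\sigma}$, which is not the stated exponent $\delta((2n-1)\sigma+1)/(n+1)$. The claimed exponent suggests the intended construction is anisotropic: the cutoff should be scaled as well, e.g.\ $\chi$ localized at scale $\lambda^{-\beta}$ in some directions, with $\|f\|_{L^2}$ and $\|f\|_{H^\delta}$ then depending on the volume factor $\lambda^{-\beta n}$ or similar. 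Alternatively, one could sum many wave packets, or use oscillation in $x$ as well as $t$ to balance the $H^\delta$ norm against the exponential decay. I would therefore parametrize $f=\lambda^{a}\chi(\lambda^{b}(t,x))e^{i\lambda t}$ (or with $(n-1)$ transverse scalings), compute all three quantities ($L^2$ norm, $H^\delta$ norm, transform bound $\exp(-c\,\lambda^{(1-b)/\sigma}\cdots)$), and optimize $b$ to match the exponent $\frac{(2n-1)\sigma+1}{n+1}$. Converting the bound $\|L_\kappa f\|\le e^{-c\mu}$, $\|f\|_{L^2}\gtrsim\mu^{-\gamma}$ into $\omega(t)\gtrsim|\log t|^{-\gamma}$ is then immediate.

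The main obstacle is the bookkeeping in Step 3: getting the Gevrey integration-by-parts constant uniform in $(z,\theta)$, including the $\theta$-derivatives of the Gevrey weight $\kappa$, and choosing the scalings that produce exactly the stated exponent. Before any optimization, I would first check whether the plain construction already yields the claimed bound.
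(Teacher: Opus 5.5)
Part (1) of your proposal is correct and takes a genuinely different route from the paper. The paper constructs no test functions. It feeds $P=\eps\psi\Op(\chi)\psi$, with symbol supported in the timelike cone, into the abstract Theorem~\ref{thm:general_insteability_localized}. It uses FIO wave front calculus (Lemma~\ref{lemma:wf_minkowski}) to see that $L_\kappa\circ P$ is smoothing, and Lemma~\ref{lem:Lmap} for the support in $Q$. In the Gevrey case it also needs Gevrey wave front sets (Lemma~\ref{lem:wfsigmaR}) and the Baire-category upgrade (Lemma~\ref{lem:upgradetoA}) to get boundedness into $A^{\sigma,\rho}_Q$. Your packets $\lambda^{-\delta}\chi e^{i\lambda t}$ are a hand-made instance of the same timelike microlocalization, and non-stationary phase along each ray replaces all of that machinery. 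One minor point: $\delta/(\delta+N)$ does not hit every $\alpha\in(0,1)$. It can, however, be made smaller than any given $\alpha$, and for $t<1$ a smaller exponent is a stronger bound. Also, $\lambda$ is a continuous parameter, so no interpolation between steps is needed.

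Part (2), as written, is not finished, and the reason is a misreading of the target rather than a missing idea. You correctly get $\abs{L_\kappa f_\lambda}\lesssim\lambda^{-\delta}e^{-c\lambda^{1/\sigma}}$ uniformly on $Q$, hence $\omega(t)\gtrsim\abs{\log t}^{-\delta\sigma}$. You then discard this in favour of an anisotropic rescaling that is never carried out. But for $\abs{\log t}\ge 1$ a smaller exponent means a larger lower bound, and
\[
\delta\sigma\le\frac{\delta((2n-1)\sigma+1)}{n+1}\iff (n-2)\sigma+1\ge 0,
\]
which holds for every $n\ge 2$. So your plain construction already proves part (2), with a strictly better exponent, and the rescaling detour should be dropped.

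The exponent in the statement is simply what Theorem~\ref{thm:general_insteability_localized}(b) outputs, namely $\delta(\sigma\dim N+1)/\dim M$ with $\dim N=2n-1$ and $\dim M=n+1$. It comes from an entropy count that ignores the structure of $L_\kappa$, so there is nothing for a wave packet to ``match''. For $n=1$ the inequality reverses, but that case is degenerate: for $\kappa\equiv 1$, say, $L_\kappa$ annihilates $(\partial_t^2-\partial_x^2)\phi$ for every $\phi\in C^\infty_c(S^{\mathrm{int}})$, so no modulus of continuity exists.

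Your worry about $\theta$-derivatives of $\kappa$ is also unfounded. You only differentiate in $s$, i.e.\ apply $(\partial_t+\theta\cdot\nabla_x)^N$ with $\theta$ frozen. Gevrey bounds for $\kappa$ and $\chi$ on the compact set $S\times S^{n-1}$ then give $\abs{\partial_s^N[\kappa(s,z+s\theta,\theta)\chi(s,z+s\theta)]}\le C^{N+1}N!^\sigma$ uniformly in $(z,\theta)$.

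As for what each route buys: the paper's argument is modular and transfers verbatim to the curved setting of Theorems~\ref{thm:manCinftyinsta} and~\ref{thm:gevrey-insta} once the FIO facts are available. Yours is elementary and, for $n\ge 2$, gives the sharper Gevrey rate $\abs{\log t}^{-\delta\sigma}$. It also adapts to curved spacetimes by replacing $t$ with a phase $\varphi$ whose differential is timelike on $\supp\chi$, since then $\frac{d}{ds}\varphi(\gamma(s))\neq 0$ along every null geodesic.
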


Our proof appeals to the machinery developed in \cite{KRS21} for general instability results for inverse problems. To apply such results, we have to study wave front sets and mapping properties of $L$. We recall that the light ray transform is unstable because it cannot recover timelike singularities. Indeed, in the Minkowski setting the normal operator associated with the light ray transform is elliptic in the spacelike cone, whereas the transform is microlocally smoothing on timelike covectors. Thus, although spacelike singularities can be recovered (microlocally under suitable assumptions), timelike ones are essentially invisible to the data, see for instance \cite{SUbook}.

Our result gives a quantitative measure of this instability. More precisely, we show that no modulus of continuity for the inverse problem can be better than the lower bounds stated in Theorem \ref{thm:instability}. In the smooth Sobolev setting this rules out any H\"older stability estimate, while in the Gevrey setting it yields a logarithmic lower bound. 

As mentioned above, we also prove analogous results on general smooth and Gevrey Lorentzian manifolds. The flat case is presented here only to highlight the main mechanism in a simple geometric setting. The manifold versions require localizing the argument in suitable coordinate charts and replacing the global construction by microlocal constructions adapted to the timelike cone. These results are stated in Theorems \ref{thm:manCinftyinsta} and \ref{thm:gevrey-insta}.

On the other hand, we obtain stability results for the moments of the light ray transform in stationary geometries. We now give a simplified version of the statement, the full one is given in Theorem \ref{thm:moment-stability}. We recall that a stationary spacetime is a manifold of the form $M=\R \times N$ (where $N$ is a compact Riemannian manifold with boundary), equipped with a Lorentzian metric which does not depend on time. Due to its symmetries, to a stationary manifold we can relate a magnetic system on the transversal manifold $N$, and lightlike geodesics are related to magnetic (or magnetic-potential) geodesics on $N$, i.e., curves satisfying a twisted (by a closed 2-form) version of the geodesic equation, see \cites{Stefanov24, MT24}. The proper definitions of magnetic objects are given in \S\ref{subsection:magnetic}. As for Riemannian manifolds, there is a notion of simplicity for magnetic systems, which implies that given any two boundary points there exists a unique magnetic geodesic joining them \cite{DPSU07}. Let us denote by $\partial_{+}SN$ the set of incoming directions.

\begin{thm} \label{thm:moment-stability-intro}
Let $(M,g)$ be a stationary Lorentzian manifold, and assume that the associated magnetic system is simple. Then, for every $k \geq 0$, there exist constants $A_{k}>0$ such that for any $f \in C_{0}^{\infty}(M)$, we have
\[
\left\| \int_{\R}t^{k}f(t,\bullet)dt \right\|_{L^{2}(N)} \leq A_{k} \sum_{r=0}^{k} \left\| \int_{\R} T^{r}L_{\gamma_{T,X,V}}f(T,\bullet)dT \right\|_{H^{1/2}(\partial_{+}SN)}.
\]
\end{thm}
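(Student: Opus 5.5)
The strategy is to reduce the moment estimate to stability for weighted magnetic(-potential) ray transforms on the simple magnetic system $(N,g_N,\Omega)$, and to close the argument by induction on $k$. Write the stationary metric as $g=-\beta(dt+\eta)^2+h$ on $\R\times N$, where $\beta,\eta,h$ do not depend on $t$. Lightlike geodesics then project to magnetic-potential geodesics $\gamma$ on $N$. After reparametrization, the time along the lift is
\[
t(\tau)=T+\phi(x,v,\tau),
\]
where $T$ is the initial time and $\phi$ is explicit: it is the integral along $\gamma$ of $\beta^{-1/2}|\dot\gamma|_h$ minus $\eta(\dot\gamma)$, and it is independent of $T$. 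So for $(x,v)\in\partial_+SN$ the transform reads
\[
L_{\gamma_{T,x,v}}f=\int f\bigl(T+\phi(x,v,\tau),\gamma_{x,v}(\tau)\bigr)\,w\,d\tau,
\]
with $w$ a smooth time-independent Jacobian or weight.

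The first step is to take the $T$-moments of this expression. Changing variables $T'=T+\phi$ (Fubini, with $f$ compactly supported) gives
\[
\int T^r L f\,dT=\sum_{j=0}^r\binom{r}{j}\int(-\phi(\tau))^{r-j}\,w\,F_j(\gamma(\tau))\,d\tau,
\qquad F_j(y)=\int t^j f(t,y)\,dt.
\]
Denote the weighted magnetic ray transform with weight $a$ by $I_a$. For $r=k$ the identity becomes
\[
I_w F_k=\int T^k L f\,dT-\sum_{j<k}\binom{k}{j} I_{w(-\phi)^{k-j}}F_j .
\]

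The second step is the base case $k=0$. This is the standard $L^2\to H^{1/2}$ stability for the (weighted) magnetic ray transform on simple magnetic systems, in the spirit of \cite{DPSU07} and the Stefanov--Uhlmann normal-operator argument:
\[
\|F\|_{L^2(N)}\le C\|I_wF\|_{H^{1/2}(\partial_+SN)} .
\]
This estimate requires injectivity. The weight $w$ is smooth and positive, and in the stationary reduction it can be normalized to be constant, or at least of the form covered by the known results (\cite{MT24}, \cite{Stefanov24}). I will take that weighted injectivity and stability as given from the preliminaries on magnetic systems.

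The third step is the induction. Applying the base estimate to $F_k$ leaves the terms $\|I_{w\phi^{k-j}}F_j\|_{H^{1/2}}$ to control. Each $I_{a}$ with $a$ smooth on the simple system is bounded $L^2(N)\to H^{1/2}(\partial_+SN)$, by the $H^{1/2}$ mapping property of ray transforms with smooth weights on nontrapping manifolds with strictly convex boundary. Hence these terms are bounded by $C\|F_j\|_{L^2}$, and by the induction hypothesis
\[
\|F_j\|_{L^2}\le A_j\sum_{r\le j}\Bigl\|\int T^rLf\,dT\Bigr\|_{H^{1/2}} .
\]
Summing over $j<k$ gives the claimed bound with $A_k$ depending on the previous constants.

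The main obstacle is the boundedness $I_a: L^2(N)\to H^{1/2}(\partial_+SN)$ for weights such as $\phi^{m}$. The travel-time function $\phi$ is smooth on the interior, but its regularity near the glancing set $\partial_0SN$ is delicate. I would first try to extend $(N,g_N,\Omega)$ to a slightly larger simple system and show that $\phi$ is smooth there. If that fails, I would replace the $H^{1/2}$ norm by the Stefanov--Uhlmann normal operator estimate $\|F\|\lesssim\|I_a^*I_aF\|_{H^1(N_1)}$, which only needs smooth weights on the extended manifold.
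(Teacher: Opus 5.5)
Your proposal is correct and follows essentially the paper's route. The paper obtains the same binomial moment identity by differentiating $\int_{\R} e^{-i\tau T}L_\gamma f\,dT=\int_I e^{i\tau t(s)}\mathcal{F}_t f(\tau,x(s))\,ds$ at $\tau=0$, where you substitute $T'=T+\phi$. It then takes the base case from stability of the $\MP$/magnetic ray transform and closes by the same induction. The induction uses $L^2\to H^{1/2}$ bounds for transforms weighted by powers of the travel time, which the paper justifies by asserting smoothness of $t^\ell$ and invoking its extension-based mapping lemma, close to your first fallback.

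One point to sharpen: no weighted injectivity is needed. Parametrizing by the affine parameter at fixed momentum $\rho$ gives $w\equiv 1$, i.e. the unweighted $\MP$-transform. Passing to the magnetic reduction only divides $f$ by a positive function of $x$ and composes with a diffeomorphism of the boundary data, so the unweighted magnetic stability suffices.
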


Although the light ray transform does not stably recover $f$ itself, our result shows that its time moments of order $k$ are stably determined by the corresponding moments of order $0$ to $k$ of the light ray transform data.

\subsection*{Organization of the article} We begin Section \ref{section:prelim} recalling some notation and the result from \cite{KRS21} that we shall use here. In \S\ref{section:minkowski} we prove Theorem \ref{thm:instability}. We then obtain the analogous results in the local smooth and Gevrey category in Sections \ref{section:smooth} and \ref{section:gevrey}, respectively. In \S\ref{section:stability} we prove Theorem \ref{thm:moment-stability-intro}. Finally, we have two appendices: the first one \S\ref{appendix:gevrey} gives a result on Fourier integral operators in the Gevrey setting, while the second one \S\ref{appendix:magnetic} recalls concepts of magnetic(-potential) systems, and gives a stability result for the magnetic (and potential) X-ray transform.

\subsection*{Acknowledgments} S.~M.-T. was supported by the European Research Council (ERC) under the European Union’s Horizon 2020 research and innovation programme (Grant agreement no. 101162990 -- ADG). L.O. was supported by the European Research Council of the European Union, grant 101086697 (LoCal), and the Research Council of Finland, grants 347715 and 359182 (Flagship of Advanced Mathematics for Sensing Imaging and Modelling).

Views and opinions expressed are those of the authors only and do not
necessarily reflect those of the European Union or the other funding
organizations.

\section{Preliminaries} \label{section:prelim}

Let us begin with the definition of Sobolev spaces, we refer also to \cite{KRS21}. Consider $(M,g)$ a closed Riemannian manifold. For any $s \in \R$, the Bessel potential $J^{s}:=(1-\Delta_{g})^{s/2} \in \Psi^{s}(M)$ allows us to define the Sobolev norm by $\|u\|_{H^{s}(M)}=\|J^{s}u\|_{L^{2}(M)}$. Equivalently, let $(\varphi_{j})_{j=1}^{\infty}$ be an orthonormal basis of $L^{2}(M)$ consisting of eigenfunctions of $-\Delta_{g}$, with associated eigenvalues $(\lambda_{j})_{j=1}^{\infty}$. Using Weyl asymptotics $1+\lambda_{j} \sim j^{\frac{2}{n}}$ ($j \geq 1$), it can be shown that 
\[ \sum_{j=1}^{\infty}j^{\frac{2s}{n}}|(u,\varphi_{j})_{L^{2}(M)}|^{2}, \]
defines an equivalent norm on $H^{s}(M)$. 

Using the same sequence of eigenfunctions as before, for $1 \leq \sigma <\infty$ and $\rho>0$, define the norm 
\[
\|u\|_{A^{\sigma, \rho}(M)}=\left(\sum_{j=1}^{\infty} e^{2 \rho j^{\frac{1}{n \sigma}}}|(u, \varphi_j)_{L^{2}(M)}|^{2}\right)^{1/2}.
\]
Define the Hilbert space $A^{\sigma,\rho}(M)$ as the subspace of $L^{2}(M)$ of elements that have finite $\| \bullet \|_{A^{\sigma,\rho}(M)}$-norm. These spaces are significant due to their connection to spaces of Gevrey functions.

\begin{defin}
    For $n\in \mathbb{N}$ and $W\subset \R^n$ open and $\sigma\geq 1$, we define $G^\sigma(W)$ as the set of $f\in C^\infty(W;\mathbb{C})$ so that for every compact $K\subset W$ there is $C>0$ so that for all multi-indices $\alpha$,
    \[
        \sup_K \abs{\partial^\alpha f} \leq C^{1+\abs{\alpha}}\alpha!^\sigma\,.
    \]
    Furthermore, if $W' \subset \R^{n'}$ with $n'\in\mathbb{N}$ is open, then we write $f \in G^\sigma(W,W')$ if and only if each component of $f$ is in $G^\sigma(W)$. We call such $f$ Gevrey-$\sigma$ or $G^\sigma$ functions.
\end{defin}
When $\sigma=1$, the above reduces to the notion of analytic functions.

Following \cite{fd}*{Proposition~2.16}, we introduce
\begin{defin}
    Let $N$ be a smooth manifold and $\sigma\geq 1$. We call $N$ a Gevrey-$\sigma$ (or $G^\sigma$) manifold if and only if there is an atlas $\mathcal{A}$ of $N$ so that $\varphi \circ \psi^{-1}$ is a $G^\sigma$ function on its Euclidean domain for all $\varphi,\psi \in \mathcal{A}$. 

    For $N_1, N_2$ two $G^\sigma$ manifolds, we call a function $u \colon N_1 \to N_2$ Gevrey-$\sigma$ and write $u\in G^\sigma(N_1,N_2)$ if and only if $\psi \circ u \circ \varphi^{-1}$ is $G^\sigma$ on its Euclidean domain for all charts $\varphi, \psi$ of $N_1, N_2$ respectively.

    Finally, we call a pair $(N,h)$ a Gevrey-$\sigma$ manifold if $N$ and $h$ are Gevrey-$\sigma$.
\end{defin}
Of course, if $\sigma=1$, then a $G^1$ manifold is merely an analytic manifold.

That our definition of the space $G^\sigma(N)$ aligns with the one in \cite{KRS21} (given there only if $N$ is analytic) follows from 
\begin{lemma}\label{lem:GsigmaonManifold}
    Let $\sigma \geq 1$ and $(N,h)$ be a closed $G^\sigma$ manifold. We have $u \in G^\sigma(N)$ if and only if for some $C, R>0$
    \begin{equation}\label{eq:laplacerepeat}
        \norm{(-\Delta_h)^t u}_{L^2(N)} \leq C R^{2t}(2t)^{2t\sigma}\,, \quad t\geq 0\,.
    \end{equation}
    Furthermore, \eqref{eq:laplacerepeat} is true if and only if it holds for all integers $t\geq 0$, and some $C', R'>0$.
\end{lemma}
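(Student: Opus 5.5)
The plan is to prove both directions by comparing $(-\Delta_h)^t u$ with the local derivatives of $u$ in a finite Gevrey atlas. We first reduce to integer $t$, and then treat the two implications separately using elliptic estimates that are quantitative in the order of differentiation.

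\emph{Reduction to integers.} Suppose the bound holds for integers $k\geq 0$. For real $t$, write $t=k+\theta$ with $\theta\in[0,1)$. By the spectral theorem and Hölder's inequality (interpolation of the spectral measure),
\[
\norm{(-\Delta_h)^t u}\le \norm{(-\Delta_h)^k u}^{1-\theta}\norm{(-\Delta_h)^{k+1}u}^{\theta}.
\]
Inserting the integer bounds gives at most $C R^{2t+2}(2k+2)^{2t\sigma+2\sigma}$. Since $(2k+2)^{2t\sigma}\le (2t)^{2t\sigma}e^{O(t)}$ and the leftover polynomial factors are absorbed by enlarging $R$, we recover \eqref{eq:laplacerepeat} with new constants $C',R'$. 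Small $t$ is handled by adjusting $C'$. The converse direction of this equivalence is trivial.

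\emph{$G^\sigma \Rightarrow$ \eqref{eq:laplacerepeat}.} Cover $N$ by finitely many $G^\sigma$ charts. In these charts $\Delta_h=\sum a^{ij}\partial_i\partial_j+b^i\partial_i$ with $G^\sigma$ coefficients, since $h$ and the transition maps are $G^\sigma$. We show by induction, as in the classical analytic case, that
\[
\sup_K\abs{\partial^\alpha \Delta_h^k u}\le C_1^{1+\abs\alpha+2k}(\abs\alpha+2k)!^\sigma .
\]
The induction step uses Leibniz's rule and the Gevrey bounds on the coefficients. The combinatorial sums close because Gevrey classes are stable under products, with the standard estimate $\sum\binom{\alpha}{\beta}\beta!^\sigma(\alpha-\beta)!^\sigma\lesssim \alpha!^\sigma$ up to geometric factors. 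Taking $\alpha=0$ and integrating over the compact $N$ gives \eqref{eq:laplacerepeat} for integer $t$, using $(2k)!^\sigma\le(2k)^{2k\sigma}$.

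\emph{\eqref{eq:laplacerepeat} $\Rightarrow G^\sigma$.} This is the main obstacle: we need Kotake--Narasimhan-type iterated elliptic regularity with Gevrey coefficients. Fix nested open sets $K\subset U_1\subset U_0$ in a chart. We aim for the estimate
\[
\norm{\partial^\alpha u}_{L^2(K)}\le C^{1+\abs\alpha}\abs\alpha!^\sigma,
\]
obtained by iterating the local elliptic estimate
\[
\norm{v}_{H^{m+2}(U')}\le C\bigl(\norm{\Delta_h v}_{H^m(U)}+d^{-m-2}\norm{v}_{L^2(U)}\bigr)
\]
on a shrinking family of domains with gaps of size $d\sim 1/\abs\alpha$. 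The constants must be tracked carefully: each commutator with the Gevrey coefficients contributes factorial-$\sigma$ growth, and this is the place where $\sigma\ge1$ enters, following the Lions--Magenes or Kotake--Narasimhan argument adapted to Gevrey coefficients. Concretely, at each step we apply the elliptic estimate to $\Delta_h^{j}u$, use \eqref{eq:laplacerepeat} to bound $\norm{\Delta_h^k u}_{L^2}$, and bound the derivatives of order $2k$ by $C^{2k}(2k)^{2k\sigma}$. Odd orders are handled by interpolation. Sobolev embedding then converts the $L^2$ bounds on $\partial^\alpha u$ into sup bounds at the cost of a fixed number of extra derivatives, which only changes $C$. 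Hence $u\in G^\sigma(N)$.
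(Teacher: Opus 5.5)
Your proposal follows the route the paper indicates: work in $G^\sigma$ charts and, for the converse, invoke the Gevrey Kotake--Narasimhan iterated elliptic estimates (which is what your sketch amounts to, since your displayed elliptic estimate only holds with $m$-dependent constants whose growth is exactly what that theorem controls), while your spectral H\"older interpolation correctly handles the passage between integer and real $t$. One bookkeeping point in the forward direction: with the hypothesis $C_1^{1+|\alpha|+2k}(|\alpha|+2k)!^\sigma$, the Leibniz term in which no derivative falls on the coefficients reproduces the bound multiplied by $\sum_{i,j}\sup_K|h^{ij}|$, so the induction does not close as written; carry a separate factor $B^k$ (i.e.\ induct on $D\,B^kC_1^{|\alpha|+2k}(|\alpha|+2k)!^\sigma$ with $B$ large) and absorb it at the end via $B^k\le B^{(|\alpha|+2k)/2}$.
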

The proof is obtained by working in local coordinates and mimicking the proof of \cite{KRS21}*{Proposition~B.1} or relying on the Kotake-Narasimhan theorem \cite{fd2}*{Theorem~2.14}. 

In particular, we have as a consequence of (the proof of) \cite{KRS21}*{Proposition~B.1} that
\begin{lemma}\label{lem:GtoA}
    Let $\sigma \geq 1$ and $(N,h)$ be a closed $G^\sigma$ manifold. If $u\in L^2(N)$ satisfies \eqref{eq:laplacerepeat} for some $C, R >0$, then there is some $\rho_0>0$ so that for all $\rho \leq \rho_0$ we have $u\in A^{\sigma,\rho}(N)$. Conversely, if $u \in A^{\sigma,\rho}(N)$ for some $\rho > 0$, then $u\in G^\sigma(N)$. 
\end{lemma}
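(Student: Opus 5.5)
The plan is to reduce both directions to statements about the eigen-coefficients $u_j=(u,\varphi_j)_{L^2(N)}$ and then use two-sided Weyl asymptotics. Throughout, $n=\dim N$. Since $(N,h)$ is closed with a smooth metric, Weyl's law gives constants $c_1,c_2>0$ and $j_0$ such that
\[
c_1 j^{2/n}\le \lambda_j\le c_2 j^{2/n},\qquad j\ge j_0 .
\]
The finitely many indices $j<j_0$, including the zero eigenvalue, only contribute a finite sum. They are therefore harmless in both directions. By spectral calculus,
\[
\norm{(-\Delta_h)^t u}_{L^2}^2=\sum_j \lambda_j^{2t}\abs{u_j}^2 .
\]

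\emph{Forward direction.} Assume \eqref{eq:laplacerepeat} holds for all real $t\ge0$. For each fixed $j\ge j_0$ we obtain the pointwise bound
\[
\abs{u_j}\le C\,R^{2t}(2t)^{2t\sigma}\lambda_j^{-t}
= C\left(\frac{R(2t)^\sigma}{\lambda_j^{1/2}}\right)^{2t}.
\]
We optimize in $t$. Choose $2t=\big(\lambda_j^{1/2}/(eR)\big)^{1/\sigma}$, so that the bracket equals $e^{-\sigma}$. This gives
\[
\abs{u_j}\le C\exp\!\big(-\sigma(eR)^{-1/\sigma}\lambda_j^{1/(2\sigma)}\big).
\]
The lower Weyl bound turns this into $\abs{u_j}\le C e^{-c\, j^{1/(n\sigma)}}$ for some $c>0$. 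Hence, for every $\rho<c$, the series $\sum_j e^{2\rho j^{1/(n\sigma)}}\abs{u_j}^2$ is dominated by $\sum_j e^{-2(c-\rho)j^{1/(n\sigma)}}<\infty$. So $u\in A^{\sigma,\rho}(N)$ for all $\rho\le\rho_0$ with any $\rho_0<c$.

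\emph{Converse direction.} Assume $u\in A^{\sigma,\rho}(N)$. Then
\[
\norm{(-\Delta_h)^t u}^2\le \Big(\sup_j \lambda_j^{2t}e^{-2\rho j^{1/(n\sigma)}}\Big)\norm{u}^2_{A^{\sigma,\rho}}.
\]
Set $y=j^{1/(n\sigma)}$. The upper Weyl bound gives $\lambda_j^{2t}\le c_2^{2t}y^{4t\sigma}$, and elementary calculus gives $\sup_{y>0}y^{4t\sigma}e^{-2\rho y}=(2t\sigma/(e\rho))^{4t\sigma}$. Taking square roots yields
\[
\norm{(-\Delta_h)^t u}\le C R'^{2t}(2t)^{2t\sigma}
\]
for a suitable $R'$, which is exactly \eqref{eq:laplacerepeat}. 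Lemma~\ref{lem:GsigmaonManifold} then gives $u\in G^\sigma(N)$.

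\emph{Main difficulty.} The one delicate point is the optimization in $t$ in the forward direction, which needs \eqref{eq:laplacerepeat} for real $t$. If the hypothesis is only available for integer $t$, Lemma~\ref{lem:GsigmaonManifold} allows passing to real $t$ at the cost of changing the constants. Alternatively, one can round $t$ to the nearest integer, which only worsens the constant $c$. Everything else is bookkeeping with the Weyl bounds.
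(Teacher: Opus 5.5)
Your proposal is correct and follows essentially the route the paper defers to, namely the proof of \cite{KRS21}*{Proposition~B.1}. In the forward direction you turn \eqref{eq:laplacerepeat} into decay of the coefficients $(u,\varphi_j)_{L^2(N)}$ by optimizing in $t$ and using Weyl's law; for the converse you estimate $\sup_j \lambda_j^{2t}e^{-2\rho j^{1/(n\sigma)}}$ with the upper Weyl bound and then invoke Lemma~\ref{lem:GsigmaonManifold}.

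There is one harmless slip. With your choice $2t=(\lambda_j^{1/2}/(eR))^{1/\sigma}$ the bracket equals $e^{-1}$, not $e^{-\sigma}$, so you get $|u_j|\le C\exp\bigl(-(eR)^{-1/\sigma}\lambda_j^{1/(2\sigma)}\bigr)$, which still gives the positive rate $c$ you need. Also, since \eqref{eq:laplacerepeat} is assumed for all real $t\geq 0$, the ``main difficulty'' you flag does not actually arise.
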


In this sense, if $(M,g)$ is a closed $G^\sigma$ manifold, then 
\[
G^{\sigma}(M)=\bigcup_{\rho>0}A^{\sigma,\rho}(M).
\]
Finally, we also define localized versions of these spaces. 

\begin{defin} \label{defin:localized_spaces}
Let $(M,g)$ be a Riemannian manifold, and let $S \subset M^{\mathrm{int}}$ be compact. Let $(M_{1},g_1)$ be a fixed closed manifold containing a neighborhood of $S$ on which $g_1$ agrees with $g$, let $s\in \mathbb{R}, \sigma\geq 1$ and $\rho>0$. We define
\begin{align*}
    H_{S}^{s}(M)&:=\{u \in H^{s} (M_{1}) : \supp(u) \subset S\}, \\
    A_{S}^{\sigma, \rho}(M)&:=\{u \in A^{\sigma, \rho}(M_{1}): \supp(u) \subset S\}.
\end{align*}
When $(M,g)$ is a $G^\sigma$ manifold, we will assume that $(M_1,g_1)$ is $G^\sigma$ as well.
\end{defin}

Our instability results are a consequence of the following general theorem.
\begin{thm}[\cite{KRS21}*{Theorem 6.2}] \label{thm:general_insteability_localized}
Let $M, N$ be smooth manifolds, let $S \subset M$ and $Q \subset N$ be compact, let $s, t \in \R$, and let $\delta>0$. Let $K_r=\{f \in H_{S}^{s+\delta}(M) ;\|f\|_{H^{s+\delta}} \le r\}$, and let $F$ be a map $K_{r} \to H_{Q}^{t}(N)$. Suppose that $\omega$ is a modulus of continuity such that
\[
\|f_{1}-f_{2}\|_{H^s(M)} \leq \omega (\|F(f_{1})-F(f_{2})\|_{H^{t}(N)}), \quad f_{1}, f_{2} \in K_{r}.
\]
\begin{enumerate}[label=(\alph*)]
    \item If there is $P \in \Psi_{\mathrm{cl}}^{0}(M)$, noncharacteristic at some point of $T^{*}S^{\mathrm{int}} \setminus\{0\}$, and $r_{0}>0$ so that $P(K_{r_{0}}) \subset K_{r}$ and $F \circ P$ maps $K_{r_{0}}$ into a bounded set of $H_{Q}^{t+m}(N)$ for any $m>0$, then for any $\alpha \in (0,1)$ one has $\omega(t) \gtrsim t^\alpha$ for $t$ small.
    \item If there is $P \in \Psi_{\mathrm{cl}}^{0}(M)$, noncharacteristic at some point of $T^{*}S^{\mathrm{int}} \setminus\{0\}$, and $r_0>0$ so that $P(K_{r_0}) \subset K_r$ and $F \circ P$ maps $K_{r_{0}}$ into a bounded set of $A_{Q}^{\sigma, \rho}(N)$ for some $1 \leq \sigma<\infty$ and $\rho>0$, then $\omega(t) \gtrsim|\log t|^{-\frac{\delta(\sigma \dim(N)+1)}{\dim(M)}}$ for $t$ small.
\end{enumerate}
\end{thm}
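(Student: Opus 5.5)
The statement above is \cite{KRS21}*{Theorem 6.2}, which we use as a black box; here is how one would prove it. The plan is an entropy comparison, by pigeonhole, between a large well-separated family in the source and a small covering of its image. Two counts are needed. First, for $\varepsilon>0$ small, find a set $\{f_j\}\subset K_{r_0}$ with
\[
\|Pf_j-Pf_k\|_{H^s}\geq\varepsilon \quad (j\neq k),
\]
of cardinality at least $\exp(c\,\varepsilon^{-\dim M/\delta})$. Second, bound the covering number of $F(P(K_{r_0}))$ in $H^t(N)$. Since this set lies in a bounded subset of $H^{t+m}_Q(N)$ (respectively of $A^{\sigma,\rho}_Q(N)$), Weyl asymptotics on a closed extension of $N$ give the following bounds on the number of $H^t$-balls of radius $\eta$ needed to cover it:
\[
\exp\bigl(C\eta^{-\dim N/m}\bigr) \quad\text{in case (a)}, \qquad \exp\bigl(C|\log\eta|^{\sigma\dim N+1}\bigr) \quad\text{in case (b)}.
\]
These come from truncating the eigen-expansion at index $J$, where the tail is $\lesssim J^{-m/\dim N}$ in case (a) and $\lesssim e^{-\rho J^{1/(\sigma\dim N)}}$ in case (b), and then covering a $J$-dimensional ball, which costs $\eta^{-J}$ balls.

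For the packing in the source I will use the noncharacteristic hypothesis. Pick $(x_0,\xi_0)\in T^*S^{\mathrm{int}}\setminus 0$ where $P$ is elliptic. Build $f$ as a cutoff $\chi$ supported near $x_0$ times a superposition of oscillations $e^{i x\cdot\xi}$, with frequencies $\xi$ in a small conic neighborhood $\Gamma$ of $\xi_0$ and $|\xi|\sim\lambda$. There are $\sim\lambda^{\dim M}$ such modes, and they are almost orthogonal. Take coefficients in $\{\pm 1\}$, chosen by a Varshamov--Gilbert argument so that distinct sign patterns differ in a fixed proportion of entries. Normalize so that $\|f\|_{H^{s+\delta}}\leq r_0$; then pairwise differences satisfy $\|f_j-f_k\|_{H^s}\gtrsim\lambda^{-\delta}$. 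Put $\varepsilon=c\lambda^{-\delta}$, so the family has size $\exp(c\lambda^{\dim M})=\exp(c'\varepsilon^{-\dim M/\delta})$.

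The main obstacle is transferring the separation through $P$. The differences are microlocalized in $\Gamma$, where $P$ is elliptic, so a microlocal parametrix $B$ with $BP=\mathrm{Id}+R$ on $\Gamma$ gives
\[
\|P(f_j-f_k)\|_{H^s}\geq c\|f_j-f_k\|_{H^s}-C\|f_j-f_k\|_{H^{s-1}}-O(\lambda^{-\infty}).
\]
The $H^{s-1}$ error is $O(\lambda^{-1-\delta})$, so it is negligible for $\lambda$ large. The care needed is in keeping the cutoff $\chi$ and the frequency localization compatible with the symbol calculus uniformly in $\lambda$; I would use Gaussian wave packets to control the tails.

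Now combine the two counts. Once the packing number exceeds the covering number, two distinct $f_j,f_k$ have $F(Pf_j)$ and $F(Pf_k)$ in the same $\eta$-ball. Since $Pf_j\in K_r$, the hypothesis gives $\varepsilon\leq\omega(2\eta)$. In case (a), solve $\varepsilon^{-\dim M/\delta}\sim\eta^{-\dim N/m}$, which gives
\[
\omega(2\eta)\gtrsim\eta^{\delta\dim N/(m\dim M)}.
\]
Since $m$ is arbitrary, for any $\alpha\in(0,1)$ we can choose $m$ large enough that $\delta\dim N/(m\dim M)\leq\alpha$. This gives $\omega(t)\gtrsim t^{\alpha}$ for $t$ small, using monotonicity of $\omega$. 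In case (b), solve $\varepsilon^{-\dim M/\delta}\sim|\log\eta|^{\sigma\dim N+1}$, which gives
\[
\omega(t)\gtrsim|\log t|^{-\delta(\sigma\dim N+1)/\dim M}.
\]
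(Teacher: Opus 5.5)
The paper does not prove this statement itself; it imports it from \cite{KRS21}. Your packing-versus-covering argument is correct and is the same entropy mechanism behind that result: a $\pm1$ family of size $\exp(c\varepsilon^{-\dim M/\delta})$, microlocalized where $P$ is elliptic, is played against a Weyl-law covering bound for $F(P(K_{r_0}))$ in $H^t$. Two pieces of bookkeeping need fixing: in case (a), truncating at $J\sim\eta^{-\dim N/m}$ and covering a $J$-dimensional ball gives $\exp(C\eta^{-\dim N/m}\log(1/\eta))$ rather than $\exp(C\eta^{-\dim N/m})$ (harmless, since enlarging $m$ absorbs the extra logarithm); and ``almost orthogonal'' should be secured by taking the frequency-lattice spacing large compared with the inverse width of the cutoff, so that the Gram matrix is diagonally dominant and $\|f_j-f_k\|_{H^s}\gtrsim\lambda^{-\delta}$ holds uniformly in $\lambda$.
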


\section{Instability for the Minkowski case} \label{section:minkowski}

In this section we prove Theorem \ref{thm:instability}. To do so, we need two ingredients: control of the wavefront set, and mapping properties. Let us begin with the first one. To this end, we follow the setting of \cite{LOSU20}*{\S~2.3}, so we consider $L_{\kappa}$ as defined in \eqref{eq:lightray} as an FIO with canonical relation $C \subset T^{*}(\mathcal{M} \times M) \setminus 0$  given by 
\begin{equation} \label{eq:cr_minkowski}
    C=\{(z,\theta,\zeta,\hat\theta;t,x,\tau,\xi)\colon x=z+t\theta, \xi=\zeta,\tau=\theta\cdot\xi,\hat\theta = t(-\xi+(\xi\cdot\theta)\theta)\}. 
\end{equation}
Here, as in the introduction, $\mathcal{M}=\R_{z}^{n} \times S_{\theta}^{n-1}$, and $M=\R^{1+n}$.

We recall that the spacelike and timelike cones in $T^{*}\R^{1+n} \setminus 0$ are given by
\[
\Sigma_{s}=\{(t,x;\tau,\xi):|\tau|<|\xi|\}, \quad \Sigma_{t}=\{(t,x;\tau,\xi):|\tau|>|\xi|\},
\]
respectively. Let $\pi \colon C \to T^{*}\mathcal{M}$ be the projection, where $C$ is as in \eqref{eq:cr_minkowski}. If $(t,x,\tau,\xi) \in \pi (C)$, then for some lightlike geodesic $\gamma_{z,\theta}$ through $(t,x)$ we have $0=\langle (\tau,\xi),\dot{\gamma}_{z,\theta}(s) \rangle=g((\tau,\xi)^{\sharp},\dot{\gamma}_{z,\theta}(s))$. Since $\gamma_{z,\theta}$ is lightlike, we have two options for $(\tau,\xi)^{\sharp}$: it is spacelike, or null and parallel to $\dot{\gamma}_{z,\theta}$. In particular, $\pi(C) \cap \Sigma_{t}=\emptyset$. Then \cite{HormanderIV}*{Equation (25.2.2)} implies $\wf(L_\kappa f) \subset C \circ \wf(f)$. In particular, we obtain:

\begin{lemma} \label{lemma:wf_minkowski}
    If $\wf(f) \subset \Sigma_{t}$, then $\wf(L_{\kappa}f)=\emptyset$.
\end{lemma}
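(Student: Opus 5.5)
The argument is the Hörmander wavefront calculus for the FIO $L_\kappa$, made concrete. For $f\in\mathcal E'(\R^{1+n})$ we use the inclusion $\wf(L_\kappa f)\subset C\circ\wf(f)$ recorded just above, where $C$ is the canonical relation \eqref{eq:cr_minkowski}, together with the observation that the right projection of $C$ avoids $\Sigma_t$. The lemma follows once we check carefully that, with this $C$, no element of $\wf(f)\subset\Sigma_t$ can be composed with $C$. A small technical point is the zero-section issue: we also need $C$ to contain no points of the form $(z,\theta,0,0;t,x,\tau,\xi)$ with $(\tau,\xi)\neq 0$. Otherwise $H\ddot{o}$rmander's formula (25.2.2) would include an additional term.

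\emph{Step 1: the right projection of $C$ is disjoint from $\Sigma_t$.} Take a point of $C$. The relations give $\tau=\theta\cdot\xi$ with $|\theta|=1$, so Cauchy--Schwarz yields $|\tau|\le|\xi|$. Hence $(t,x,\tau,\xi)\notin\Sigma_t$; this is the algebraic version of the geometric argument in the text. It follows that if $\wf(f)\subset\Sigma_t$, then
\[
C\circ\wf(f)=\{(y,\eta)\colon \exists (t,x,\tau,\xi)\in\wf(f),\ (y,\eta;t,x,\tau,\xi)\in C\}=\emptyset .
\]

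\emph{Step 2: no zero-section degeneracy.} Suppose $\zeta=0$ and $\hat\theta=0$ on $C$. Then $\xi=\zeta=0$, so $\tau=\theta\cdot\xi=0$ as well. Therefore $C$ has no elements with vanishing left covector and nonzero right covector. Equivalently, $L_\kappa$ has no "$\wf'_{\mathcal M}$"-type term, and the full statement of (25.2.2), $\wf(Af)\subset \wf'(A)\circ\wf(f)\cup \wf_{\mathcal M}(A)$, reduces to $\wf(L_\kappa f)\subset C\circ \wf(f)$. We also need the Schwartz kernel of $L_\kappa$ to have wavefront set contained in $C$ (which also excludes $\wf_{\mathcal M}$). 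This follows by writing the kernel as $\kappa(t,x,\theta)\,\delta(x-z-t\theta)$, a conormal distribution to the smooth submanifold $\{x=z+t\theta\}$ of $\mathcal M\times M$. Its conormal bundle, after the sign twist, is exactly \eqref{eq:cr_minkowski}. The covector $\xi$ dual to $x-z-t\theta$ is nonzero on the conormal bundle, so neither projection hits the zero section. Finally, $f$ is compactly supported, and $\kappa$ is smooth, so the composition is well defined.

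Combining Steps 1 and 2 gives $\wf(L_\kappa f)\subset\emptyset$, which is the lemma, i.e.\ $L_\kappa f\in C^\infty(\mathcal M)$. The only step needing care is Step 2, namely the computation of the kernel's wavefront set and the exclusion of zero-section contributions. It is a routine conormal computation, and I expect no real obstacle.
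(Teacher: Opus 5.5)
Your proposal is correct and is essentially the paper's argument: the paper obtains $\pi(C)\cap\Sigma_t=\emptyset$ from the fact that a covector annihilating the null vector $\dot\gamma_{z,\theta}$ is spacelike or null, which is what your Cauchy--Schwarz bound $|\tau|=|\theta\cdot\xi|\le|\xi|$ says in coordinates, and then concludes via $\wf(L_\kappa f)\subset C\circ\wf(f)$ from H\"ormander's (25.2.2); your zero-section check is care the paper leaves implicit. Two minor slips do not affect the argument, and your observation that $\xi\neq 0$ on the conormal bundle settles the second. First, the conormal bundle of $\{x=z+t\theta\}$ actually gives $\tau=-\theta\cdot\xi$, because the covector must annihilate $(1,\theta)$, so it is not literally \eqref{eq:cr_minkowski}; your bound only uses $|\theta\cdot\xi|$. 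Second, the extra term in H\"ormander's formula comes from points with vanishing \emph{right} covector, while vanishing left covector concerns whether $L_\kappa f$ is defined.
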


In the ultra-differentiable case, we have a similar result. For any $\sigma \in [1,\infty)$ we denote by $\wf_\sigma$ the Gevrey-$\sigma$ wavefront set, defined for example in \cite{HormanderI}*{\S8}. 

To obtain the analog of Lemma \ref{lemma:wf_minkowski}, we rely this time on \cite{KRS21}*{Proposition~D.3} (see also \cite{gramchev}*{Proposition~4.1}, \cites{tanig, lasc}). This allows us to state without proof that for any $\sigma>1$, $\kappa \in G^\sigma(\R^{1+n} \times S^{n-1})$, 
\[
    \wf_\sigma(L_\kappa) \subset \{(z,\theta,\zeta,\hat\theta;t,x,\tau,\xi)\colon x=z+t\theta, \xi=\zeta,\tau=\theta\cdot\xi,\hat\theta = t(-\xi+(\xi\cdot\theta)\theta)\}\,.
\]
In particular, $\wf_\sigma(L_\kappa)$ contains no nonzero timelike covectors. This observation combined with the fact that $(z,\theta,\zeta,\hat\theta;t,x,\tau,\xi) \in \wf_\sigma(L_\kappa)$ with $\zeta = 0 = \hat\theta$ if and only if $\tau = 0 = \xi$ and \cite{HormanderI}*{Theorem.~8.5.5} implies that
\begin{lemma}\label{lem:wfsigmaR} For $\sigma >1$, $\kappa \in G^\sigma(\R^{1+n} \times S^{n-1})$ and $f\in \mathcal{E}'(\R^{1+n})$,
\[ \wf_\sigma(f) \subset \Sigma_t \implies L_\kappa f \in G^\sigma(\R^{n}\times S^{n-1})\,.\]
\end{lemma}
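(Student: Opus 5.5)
The plan is to apply the Gevrey analogue of the wavefront set calculus for kernels, namely \cite{HormanderI}*{Theorem~8.5.5}, to the operator $L_\kappa$ viewed through its Schwartz kernel $K_\kappa \in \mathcal{D}'(\mathcal{M}\times\R^{1+n})$. That theorem states the following. Let $K$ be a kernel and $f\in\mathcal{E}'$. Suppose no point of $\wf_\sigma(f)$ pairs with a point of $\wf_\sigma(K)$ of the form $(y,0;x,-\eta)$, i.e.\ $\wf_\sigma'(K)_{\R^{1+n}}\cap \wf_\sigma(f)=\emptyset$. Then $Kf$ is well defined and
\[
\wf_\sigma(Kf)\subset \wf_\sigma'(K)\circ\wf_\sigma(f)\,\cup\, \wf_\sigma'(K)_{\mathcal{M}},
\]
where $\wf_\sigma'(K)_{\mathcal{M}}$ denotes the set of points $(y,\eta)$ with $(y,\eta;x,0)\in\wf_\sigma'(K)$. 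The conclusion $L_\kappa f\in G^\sigma(\R^n\times S^{n-1})$ then amounts to showing that the right-hand side is empty, since a distribution with empty Gevrey-$\sigma$ wavefront set is $G^\sigma$.

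First I record the two facts stated just before the lemma. By \cite{KRS21}*{Proposition~D.3}, $\wf_\sigma(L_\kappa)$ is contained in the set $C$ of \eqref{eq:cr_minkowski}. Moreover, a point of $C$ has $(\zeta,\hat\theta)=0$ if and only if $(\tau,\xi)=0$. This follows from $\xi=\zeta$ and $\tau=\theta\cdot\xi$: if $\zeta=0$ then $\xi=0$ and $\tau=0$, and conversely if $\xi=0$ then $\zeta=0$ and $\hat\theta=0$.

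Next I check the two conditions.

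\textbf{Vanishing of the degenerate sets.} Since $\wf_\sigma$ consists of covectors that are nonzero in the full product $T^*(\mathcal{M}\times\R^{1+n})$, this equivalence shows there is no point of $\wf'_\sigma(L_\kappa)$ with exactly one of the two covector components equal to zero. Hence both $\wf_\sigma'(L_\kappa)_{\mathcal{M}}$ and $\wf_\sigma'(L_\kappa)_{\R^{1+n}}$ are empty. In particular the hypothesis of Theorem~8.5.5 holds automatically, and $L_\kappa f$ is defined for every $f\in\mathcal{E}'$.

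\textbf{Vanishing of the composition.} If $(z,\theta,\zeta,\hat\theta;t,x,\tau,\xi)\in C$, then $\tau=\theta\cdot\xi$ with $|\theta|=1$, so $|\tau|\le|\xi|$ and $(\tau,\xi)\notin\Sigma_t$. This is the same observation as in the smooth case, that $\pi(C)\cap\Sigma_t=\emptyset$. Since $\wf_\sigma(f)\subset\Sigma_t$, the composition $C\circ\wf_\sigma(f)$ is empty.

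Therefore $\wf_\sigma(L_\kappa f)=\emptyset$. Because $\mathcal{M}$ is an analytic manifold, applying \cite{HormanderI}*{\S8.4} in local charts gives $L_\kappa f\in G^\sigma(\R^n\times S^{n-1})$.

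The main obstacle is not the composition argument but the input that the Gevrey wavefront set of the kernel lies in $C$. This requires $\kappa\in G^\sigma$ and the kernel being a Gevrey conormal distribution to the incidence manifold $\{x=z+t\theta\}$, which is where $\sigma>1$ (cutoffs) is used. Since the paper allows taking this from \cite{KRS21}*{Proposition~D.3} without proof, the remaining issues are minor: checking that the formal hypotheses of Theorem~8.5.5 hold (compact support of $f$ makes the projection proper on its support), and noting that on $S^{n-1}$ one may use the homogeneity of $\kappa$ to work in local coordinates.
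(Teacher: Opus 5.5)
Your proposal is correct and follows essentially the same route as the paper: you take $\wf_\sigma(L_\kappa)\subset C$ from Proposition~D.3 of KRS21, and you use the equivalence $(\zeta,\hat\theta)=0 \iff (\tau,\xi)=0$ to show that the degenerate sets are empty. You then use $|\tau|=|\theta\cdot\xi|\le|\xi|$ to get $C\circ\wf_\sigma(f)=\emptyset$ and conclude via Theorem~8.5.5 of H\"ormander~I, which is exactly the argument the paper sketches just before the lemma.
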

Having settled the control of the wavefront set, we turn to obtaining the required mapping property. 
\begin{lemma} \label{lem:Lmap} 
Let $R$, $S$ and $Q$ be as in \eqref{eq:compacts}. We have
\[
L_{\kappa} \colon H_{S}^{0}(\R^{1+n}) \to  H_{Q}^{0}(\R^n\times S^{n-1}).
\]
\end{lemma}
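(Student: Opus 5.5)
The plan is to prove the $L^2$ bound directly by Cauchy--Schwarz along each light ray, using the compact support of $f$ to limit the length of integration. We then check separately that the support of $L_\kappa f$ lies in $Q$. Since $\kappa$ is smooth, it is bounded on the compact set $S\times S^{n-1}$; set $C_\kappa=\sup_{S\times S^{n-1}}|\kappa|$.

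First, the support. Take $f\in H^0_S$, so $\supp f\subset S=\overline{B_R(0)}$. If $L_\kappa f(z,\theta)\neq 0$, the ray $s\mapsto(s,z+s\theta)$ meets $S$ at some parameter $s$. There $s^2+|z+s\theta|^2\le R^2$, so $|s|\le R$ and $|z+s\theta|\le\sqrt{R^2-s^2}$. Hence
\[
|z|\le |s|+\sqrt{R^2-s^2}\le \sqrt 2 R
\]
by Cauchy--Schwarz in $\R^2$. So $\supp L_\kappa f\subset Q$. This is where the radius $\sqrt2R$ in \eqref{eq:compacts} comes from.

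Next, the norm. For fixed $(z,\theta)$ the integrand vanishes unless $|s|\le R$. Cauchy--Schwarz then gives
\[
|L_\kappa f(z,\theta)|^2\le 2R\,C_\kappa^2\int_{\R}|f(s,z+s\theta)|^2\,ds .
\]
Integrate over $z\in\R^n$ and $\theta\in S^{n-1}$ and apply Fubini. For fixed $\theta$ and $s$, the map $z\mapsto z+s\theta$ is a translation of $\R^n$ with unit Jacobian. So
\[
\int_{\R^n}\int_{\R}|f(s,z+s\theta)|^2\,ds\,dz=\|f\|_{L^2(\R^{1+n})}^2 .
\]
Integrating over $\theta$ yields
\[
\|L_\kappa f\|_{L^2(\R^n\times S^{n-1})}^2\le 2R\,C_\kappa^2\,|S^{n-1}|\,\|f\|_{L^2}^2 .
\]
Here $H^0$ is $L^2$, with the norm on the closed extension manifold equivalent to the Euclidean one near the compact set. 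This gives boundedness.

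The argument is elementary and I see no real obstacle. The only points needing care are as follows. First, the estimate should be proved for $f\in C_0^\infty$ supported in $S$ and then extended by density. Second, the definition of $L_\kappa$ on $\mathcal E'$ must agree with the $L^2$ extension. Third, one should note that homogeneity of $\kappa$ plays no role, since $\theta$ is a unit vector.
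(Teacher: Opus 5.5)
Your proof is correct and follows the paper's argument. It applies Cauchy--Schwarz along each ray, using that the relevant $s$-interval has length at most $2R$, then uses the translation $y=z+s\theta$ to recover $\|f\|_{L^2}$, with density to pass to all of $H^0_S$. The only difference is cosmetic and lies in the support step: you bound $|z|\le |s|+|z+s\theta|\le\sqrt{2}R$, whereas the paper minimizes $s^2+|z+s\theta|^2$ over $s$ (minimum $|z|^2-(z\cdot\theta)^2/2\ge |z|^2/2$). Both give the same radius $\sqrt{2}R$.
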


\begin{proof}
By density, it is enough to consider $f \in C_{0}^{\infty}(\R^{1+n})$ with $\supp f \subset S$. For fixed $(z,\theta)$, the integrand is nonzero only for those $s$ such that $(s,z+s\theta)\in S$. In particular, since $S\subset [-R,R]\times \R^{n}$, the relevant set of $s$ has length at most $2R$. Thus, by Cauchy--Schwarz inequality,
\[
|L_\kappa f(z,\theta)|^2 \leq 2R\|\kappa\|_{L^{\infty}(S\times S^{n-1})}^2 \int_{\R}|f(s,z+s\theta)|^{2}ds.
\]
Integrating this over $(z,\theta) \in \R^{n} \times S^{n-1}$, we obtain 
\[
\|L_\kappa f\|_{L^{2}(\R^n\times S^{n-1})}^{2} \leq 2R\|\kappa\|_{L^{\infty}(S\times S^{n-1})}^{2} \int_{S^{n-1}}\int_{\R^n}\int_{\R}|f(s,z+s\theta)|^{2} ds dz d\theta.
\]
Now use the change of variable $y=z+s\theta$ to get
\[
\|L_{\kappa} f\|_{L^{2}(\R^{n}\times S^{n-1})}^{2} \leq 2R|S^{n-1}|\|\kappa\|_{L^{\infty}(S\times S^{n-1})}^2 \|f\|_{L^2(\R^{1+n})}^{2}.
\]
To show the support condition, we argue as follows. In the first place, note that if $L_{\kappa} f(z,\theta)\neq 0$, then there exists $s \in \R$ such that $(s,z+s\theta)\in S$, i.e., 
\begin{equation} \label{eq:ineq_support}
    s^{2}+|z+s\theta|^{2}\leq R^{2}.
\end{equation}
Now observe that the minimum (in $s$) of $s^{2}+|z+s\theta|^{2}$ is attained at $s=-(z\cdot\theta)/2$ and it is equal to $|z|^{2}-(z\cdot\theta)^{2}/2$. Since $\theta \in S^{n-1}$, we have $(z\cdot\theta)^{2} \leq |z|^2$. Thus, the minimum is bounded below by $|z|^{2}/2$. Therefore, \eqref{eq:ineq_support} holds only if $|z| \leq \sqrt{2}R$. Thus $\supp L_{\kappa} f\subset Q$, finishing the proof.    
\end{proof}

While Lemmas~\ref{lemma:wf_minkowski} and~\ref{lem:Lmap} suffice to prove Theorem~\ref{thm:instability} in the smooth setting, we require an additional ingredient for the Gevrey version. The following result is a minor extension of \cite{BLOS26}*{Proposition~6}.

\begin{lemma}\label{lem:upgradetoA}
    Let $\sigma\geq 1$, and $M$ be a smooth manifold and $N$ a $G^\sigma$ manifold and let $S \subset N^{\mathrm{int}}$ be compact. Let $F \colon L^2(M) \to G^\sigma(N)$ be linear with $F \colon L^2(M) \to L^2(N)$ continuous and $\supp F(L^2(M)) \subset S$. There is $\rho>0$ so that $F \colon L^2(M) \to A^{\sigma,\rho}_S(N)$ is continuous. 
\end{lemma}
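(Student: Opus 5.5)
This will be a closed graph plus Baire category argument, run on the Laplace characterization of $G^\sigma$ from Lemma \ref{lem:GsigmaonManifold}, and then passed to the $A^{\sigma,\rho}$ scale via Lemma \ref{lem:GtoA}. Here $N$ plays the role of a closed $G^\sigma$ manifold (or we work inside the fixed closed extension from Definition \ref{defin:localized_spaces}), with Laplacian $\Delta_h$.

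For $C,R>0$ (taking integers $C=R=k\in\N$) define
\[
E_k=\{f\in L^2(M)\colon \|(-\Delta_h)^t Ff\|_{L^2(N)}\le k\,k^{2t}(2t)^{2t\sigma}\ \text{for all integers } t\ge 0\}.
\]
By Lemma \ref{lem:GsigmaonManifold}, every $Ff\in G^\sigma(N)$ satisfies \eqref{eq:laplacerepeat} for some constants, and it suffices to check integer $t$. Hence $L^2(M)=\bigcup_k E_k$.

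Each $E_k$ is closed. Indeed, $F\colon L^2(M)\to L^2(N)$ is continuous, and $(-\Delta_h)^t$ is continuous from $L^2(N)$ to distributions. So if $f_j\to f$ in $L^2$, then $(-\Delta_h)^tFf_j\to(-\Delta_h)^tFf$ distributionally. Since the $L^2$ norm is lower semicontinuous under distributional convergence (pair against test functions), the bound passes to the limit. Each $E_k$ is also convex and symmetric.

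By Baire's theorem, some $E_k$ contains a ball $B(f_0,\epsilon)$. By symmetry it also contains $B(-f_0,\epsilon)$. For $\|g\|<\epsilon$ we write $g=\tfrac12(f_0+g)+\tfrac12(-f_0+g)$. Linearity of $F$ and the triangle inequality then give $g\in E_k$. By homogeneity,
\[
\|(-\Delta_h)^tFf\|_{L^2}\le C'\|f\|_{L^2}\,k^{2t}(2t)^{2t\sigma}\quad\text{for all } f\in L^2(M),
\]
with constants independent of $f$.

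Now apply the proof of Lemma \ref{lem:GtoA}, i.e.\ \cite{KRS21}*{Proposition~B.1}, and track constants. The main point to verify there is that the $\rho_0$ produced depends only on $R$ (here $k$) and $\sigma$, not on $C$. It should also give $\|u\|_{A^{\sigma,\rho}}\le C''C$ for $\rho\le\rho_0$. This linearity in $C$ is the expected obstacle; I would re-derive it directly. Write $|(u,\varphi_j)|\le \lambda_j^{-t}\|(-\Delta_h)^tu\|\le C\lambda_j^{-t}R^{2t}(2t)^{2t\sigma}$ and use Weyl's law $\lambda_j\sim j^{2/n}$. Optimizing in $t$ gives $|(u,\varphi_j)|\le C\,c_1 e^{-c_2 j^{1/(n\sigma)}}$ with $c_1,c_2$ depending only on $R,\sigma,n$. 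Choosing $\rho<c_2$ makes $\sum_j e^{2\rho j^{1/(n\sigma)}}|(u,\varphi_j)|^2\lesssim C^2$.

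This yields $\|Ff\|_{A^{\sigma,\rho}}\lesssim\|f\|_{L^2}$. Combined with the hypothesis $\supp F(L^2(M))\subset S$, this shows $F\colon L^2(M)\to A^{\sigma,\rho}_S(N)$ is bounded, hence continuous.
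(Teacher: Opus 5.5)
Your proof is correct. It rests on the same Baire category mechanism as the paper's, but you apply Baire to different sets.

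The paper works directly with the two-parameter family $A_{j,k}=\{u\in L^2(M)\colon \|Fu\|_{A^{\sigma,1/j}(N)}\le k\}$:
\begin{itemize}
\item their union is $L^2(M)$ by the purely qualitative identity $G^\sigma(N)=\bigcup_{\rho>0}A^{\sigma,\rho}(N)$ from Lemma~\ref{lem:GtoA};
\item they are closed by Fatou's lemma on the eigenfunction coefficients;
\item since the sets are balls in the target norm itself, an interior point immediately gives $r\|Fu\|_{A^{\sigma,1/j_\ast}(N)}\le 2k_\ast$ for $\|u\|\le 1$.
\end{itemize}
No constants ever need to be tracked, because indexing by $\rho=1/j$ as well as by the bound $k$ lets Baire choose $\rho$ for you.

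You instead run Baire on the Laplace-bound sets $E_k$ from Lemma~\ref{lem:GsigmaonManifold}. This forces the extra step you correctly flagged as the main obstacle: a quantitative form of Lemma~\ref{lem:GtoA} in which $\rho_0$ depends only on $R$ (and $\sigma$, $\dim N$), and the $A^{\sigma,\rho}$-norm is linear in $C$. Your re-derivation works: the bound $|(u,\varphi_j)|\le\lambda_j^{-t}\|(-\Delta_h)^t u\|_{L^2}$, combined with Weyl's law and optimization over integer $t$, gives decay $C c_1 e^{-c_2 j^{1/(n\sigma)}}$. You should say explicitly that the finitely many $j$ with $\lambda_j=0$ or $\lambda_j$ small are handled by the $t=0$ bound $|(u,\varphi_j)|\le\|u\|_{L^2}\le C$. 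Your closedness argument via distributional convergence and your convexity/symmetry step are also fine.

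What your route buys is an explicit relation between the admissible $\rho$ and the uniform Gevrey constant. The cost is essentially redoing the proof of \cite{KRS21}*{Proposition~B.1}, which the paper's choice of sets avoids entirely.
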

Note that the statement is not vacuous for $\sigma=1$: the case of $N$ being a closed manifold and $S=N$ is non-trivial. 
\begin{proof}
    Let $(N_1,h_1)$ be a closed $G^\sigma$ manifold containing an open neighborhood of $S$ on which $h_1$ agrees with $h$. By extending $F$ as zero outside of $S$, and replacing $N$ by $N_1$, we may assume that $N$ is closed. 

    For integers $j,k \geq 1$ define the set
    \[
        A_{j,k} = \{u\in L^2(M)\colon \norm{Fu}_{A^{\sigma,1/j}(N)}\leq k\}\,,
    \]
    and note that $L^2(M) = \bigcup_{j, k =1}^\infty A_{j,k}$ because $G^\sigma(N) = \bigcup_{\rho>0}A^{\sigma,\rho}(N)$ by Lemma~\ref{lem:GtoA}. Each of the sets $A_{j,k}$ is closed: if $u_m \to u$ in $L^2(M)$ with $u_m \in A_{j,k}$ for all $m$, then $Fu_m \to Fu$ in $L^2(N)$ and so $\langle Fu_m,\varphi_l\rangle \to \langle Fu,\varphi_l\rangle$ for all eigenfunctions $\varphi_l$, so that Fatou's lemma implies 
    \[
        \norm{Fu}^2_{A^{\sigma,1/j}(N)} \leq \liminf_{m\to\infty} \norm{Fu_m}^2_{A^{\sigma,1/j}(N)} \leq k^2\,.
    \]
    The closedness of each $A_{j,k}$ and the Baire category theorem applied to $L^2(M)$ imply that there are some $j_\ast, k_\ast$ so that $A_{j_\ast,k_\ast}$ has non-empty interior, which is to say that there is some $u_0 \in L^2(M)$ and some $r>0$ so that $\overline{B^{L^2(M)}_r(u_0)} \subset A_{j_\ast,k_\ast}$. Thus, for any $u\in L^2(M)$ with $\norm{u} \leq 1$ we have
    \begin{align*}
        r\norm{Fu}_{A^{\sigma,1/j_\ast}(N)} \leq \norm{F(u_0+ru)}_{A^{\sigma,1/j_\ast}(N)} + \norm{Fu_0}_{A^{\sigma,1/j_\ast}(N)} \leq 2 k_\ast\,,
    \end{align*}
    which completes the proof.
\end{proof}

We finish this section by giving the proof of our instability result in the Minkowski case.

\begin{proof}[Proof of Theorem \ref{thm:instability}] \hfill
    \begin{enumerate}
        \item Choose a point $x_{0}=(t_{0},y_{0})\in S^{\mathrm{int}}$ and a timelike covector $\xi_{0}=(\tau_{0},\eta_{0}) \in T_{x_{0}}^{*}M\setminus 0$. Let $\psi \in C_{0}^{\infty}(S^{\mathrm{int}})$ satisfy $\psi(x_{0})=1$, and let $ \chi \in S_{\mathrm{cl}}^{0}(T^{*}M)$ be supported in a sufficiently small conic neighborhood of $(x_{0},\xi_{0})$, contained in the timelike cone $\Sigma_{t}$, with $\chi(x_{0},\xi_{0})\neq 0$. Define $P=\varepsilon\,\psi \Op(\chi)\psi$, where $\eps>0$ will be chosen small. Then
        \begin{itemize}
            \item $P\in \Psi_{\mathrm{cl}}^{0}(M)$,
            \item $P$ is noncharacteristic at $(x_{0},\xi_{0})$,
            \item $\supp(Pf)\subset \supp\psi\subset S$ for every distribution $f$.
        \end{itemize}
        Since $P$ has order zero, it is bounded on $H^{\delta}(M)$. Hence, by taking $\eps>0$ small enough, we can find $r_{0}>0$ such that $P(K_{r_{0}})\subset K_{r}$. In addition, since $\wf(P)\subset \Sigma_{t}$, by Lemma \ref{lemma:wf_minkowski}, we obtain $C\circ \wf(P)=\emptyset$. Hence, $L_{\kappa}\circ P$ is smoothing. Thus, for every $m>0$,
        \[
        L_{\kappa}\circ P \colon K_{r_{0}} \to H_{Q}^{m}(\R^{n}\times S^{n-1})
        \]
        has bounded image. The proof is finished after an application of Theorem \ref{thm:general_insteability_localized}(a).    
        \item We shall modify the choice of the pseudodifferential operator $P$ from above as follows: we assume in addition that $\psi, \chi \in G^\sigma$ (possible by \cite{HormanderI}*{Thm.1.4.10}, \cite{Luigi}*{\S~1.4})), and for some $C>0$ and all $\alpha$ satisfies
        \[ 
           \abs{\partial^\beta_{x}\partial^\alpha_\xi \chi} \leq C^{1+\abs{\alpha}+\abs{\beta}}(\alpha!\beta!)^\sigma(1+\abs{\xi})^{-\abs{\alpha}}\,,\quad \abs{\xi}>C\,,
        \]
        see \cite{Luigi}*{\S~3.4}. Thus, according to \cite{HormanderI}*{\S~8.5} or \cite{Luigi}*{\S~3.4}, we have
        \[
          \wf_\sigma(Pf) \subset \Sigma_t  \quad \forall f\in \mathcal{E}'(\R^{1+n})\,,
        \]
        and recall that there is some $r_0>0$ so that $P(K_{r_0}) \subset K_r$.
        
        According to Lemmas~\ref{lem:Lmap} and~\ref{lem:wfsigmaR}, we have
        \[
            L_\kappa \circ P \colon L^2(\R^{1+n}) \to G^\sigma( \R^n\times S^{n-1}),
        \]
        with $\supp L_\kappa \circ P(u) \subset Q$ for every $u\in L^2(\R^{1+n})$. Thus, by Lemma~\ref{lem:upgradetoA} there is some $\rho >0$ so that
        \[
            L_\kappa \circ P \colon K_{r_0} \subset L^2(\R^{1+n}) \to A^{\sigma,\rho}_{Q}(\R^n\times S^{n-1})
        \]
        is a continuous and linear map. In particular, the map $L_\kappa$ satisfies the assumptions of Theorem~\ref{thm:general_insteability_localized} and $L_\kappa\circ P$ part (b) thereof, which completes the proof.
    \end{enumerate}
\end{proof}

\section{Instability on smooth manifolds} \label{section:smooth}

Let $(M,g)$ be a Lorentzian manifold of dimension $n+1$. We will follow \cite{LOSU20}*{Section 3}. We work on the space of lightlike geodesics close to a fixed lightlike geodesic $\gamma_{0} \colon [0,\ell] \to M$. To give a smooth structure to this set, we argue as follows. Since we work near $\gamma_{0}$, we can parameterize lightlike geodesics near $\gamma_{0}$ by choosing a spacelike hypersurface $H$ containing $\gamma_{0}(0)$ and semigeodesic coordinates associated to $H$, $(t,z) \in (-T,T) \times Z \subset \R \times \R^{n}$. In these coordinates, $H=\{t=0\}$ and $g=-dt^{2}+g'$, with $g'=g'(t, z)$ a Riemannian metric on $Z$ that depends smoothly on $t$. Moreover, the coordinates are chosen so that $\gamma_{0}(0)=0$ and $\dot{\gamma}_{0}(0)=(1, \theta_{0})$ where, writing $h=g'(0, \cdot)$, we have $(\theta_{0}, \theta_{0})_h=1$ since $\gamma_{0}$ is lightlike. On the unit sphere bundle $SZ$ (with respect to $h$), we choose local coordinates of the form $(z,a) \in Z \times A$, (where $A \subset \R^{n-1}$), so that $\theta(0,0)=\theta_{0}$, where 
\begin{align*}
    Z \times A &\to SZ, \\
    (z,a) &\mapsto (z,\theta(z,a)).
\end{align*}
We denote $\gamma_{z, a}$ the geodesic that at zero passes through $(0, z)$ with velocity $(1, \theta(z, a))$. As is customary in the literature, we denote $\mathcal{M}=Z \times A$. 

Let $\Omega \subset M$ be open and relatively compact, and suppose that the end points $\gamma_{0}(0)$ and $\gamma_{0}(\ell)$ are outside $\overline{\Omega}$. By making $Z$ and $A$ smaller, we may assume without loss of generality that the end points $\gamma_{z, a}(0)$ and $\gamma_{z, a}(\ell)$ are outside $\bar{\Omega}$ for all $(z, a) \in \overline{\mathcal{M}}$. Throughout this section, we consider the local version of the light ray transform defined as follows
\begin{equation}\label{eq:Llorentz}
L_{\kappa} f(\gamma)=\int_{0}^{\ell} \kappa(\gamma(s), \dot{\gamma}(s)) f(\gamma(s)) ds, 
\end{equation}
where $f \in C_{0}^{\infty}(\Omega)$, $\gamma=\gamma_{z, a}$, $(z, a) \in \mathcal{M}$, and $\kappa \in C^{\infty}$ is a weight function, positively homogeneous of degree $1$ in the second variable (so that it is independent of the parametrization). 

Since the following considerations are local, we restrict, if necessary, to a neighborhood in $\mathcal{M} \times (0,\ell)$ on which the map $(z,a,s) \mapsto (z,a,\gamma_{z,a}(s))$ is injective. We denote its image in $\mathcal{M} \times \Omega$ by $X$. The point-geodesic relation
\[
X=\{(z, a, x) \in \mathcal{M} \times \Omega : x=\gamma_{z, a}(s) \text { for some } s \in(0, \ell)\},
\]
is a smooth $2n$ dimensional submanifold of the $3n$ dimensional $\mathcal{M} \times \Omega$, parameterized by the map $(z, a, s) \mapsto (z, a, \gamma_{z, a}(s))$. Writing $x=\gamma_{z, a}(s)$, this map has differential
\[ 
\begin{pmatrix}
    \mathrm{Id} & 0 & 0 \\
0 & \mathrm{Id} & 0 \\
\partial x / \partial z & \partial x / \partial a & \dot{\gamma}_{z,a}(s)
\end{pmatrix} \begin{pmatrix}
    dz \\ da \\ ds
\end{pmatrix}.
\] 
which has maximal rank $2n$. Thus, after the above localization, this map is an embedding onto its image, and $X$ is a smooth $2n$-dimensional embedded submanifold of $\mathcal{M} \times \Omega$. The conormal bundle $N^{*}X$ at any point is the space conormal to the range of that differential. The canonical relation $C:=N^{*}X \subset T^{*}(\mathcal{M} \times \Omega) \setminus 0$ of $L_\kappa$ is given by 
\begin{equation} \label{eq:canonical_relation}
    \begin{split}
    C=\big\{ &\left( (z,a,\zeta,\alpha),(x,\xi) \right) : x=\gamma_{z,a}(s), \, \langle \xi,\dot{\gamma}_{z,a}(s)\rangle=0, \, \zeta_{j}=\langle \xi,\partial_{z^{j}}\gamma_{z,a}(s)\rangle,  \\
    &   \text{for } j=1,\ldots,n, \, \alpha_{k}=\langle \xi,\partial_{a^{k}}\gamma_{z,a}(s)\rangle \text{ for } k=1,\ldots,n-1, \, s\in (0,\ell) \big\},
\end{split}
\end{equation}
We see from the calculation of the differential of $(z,a,s)\mapsto (z,a,\gamma_{z,a}(s))$ as well as \cite{LOSU20}*{Lemma~3.1} that
\begin{lemma}\label{lem:sumbersions}
    The projections $\pi_{\mathcal{M}}\colon X\to \mathcal{M}$ and $\pi_{M}\colon X\to M$ are submersions. 
\end{lemma}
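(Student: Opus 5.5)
Both projections are restrictions to $X$ of the coordinate projections from $\mathcal{M}\times\Omega$. To check that they are submersions, I will read their differentials off the parametrization
\[
\Phi\colon(z,a,s)\mapsto(z,a,\gamma_{z,a}(s)),
\]
which is a diffeomorphism onto $X$ after the localization above.

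\emph{The projection $\pi_{\mathcal{M}}$.} We have $\pi_{\mathcal{M}}\circ\Phi(z,a,s)=(z,a)$. Its differential is the block $(\mathrm{Id}\ \ 0\ \ 0)$ restricted to the first two rows of the Jacobian computed above, and it has full rank $2n-1=\dim\mathcal{M}$. Since $\Phi$ is a diffeomorphism onto $X$, $d\pi_{\mathcal{M}}$ is surjective at every point of $X$. This part is immediate.

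\emph{The projection $\pi_M$.} We have $\pi_M\circ\Phi(z,a,s)=\gamma_{z,a}(s)$, with differential
\[
(\partial x/\partial z,\ \partial x/\partial a,\ \dot\gamma_{z,a}(s)).
\]
I need this to have rank $n+1$. The plan is to show that the $n+1$ vectors $\partial_{z^j}\gamma_{z,a}(s)$ and $\dot\gamma_{z,a}(s)$ already span $T_xM$.

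Varying $z$ with $a$ fixed gives a family of lightlike geodesics. These are exactly the Jacobi fields $J_j$ along $\gamma_{z,a}$ with initial data
\begin{itemize}
\item $J_j(0)=\partial_{z^j}$, tangent to $H$ at $(0,z)$;
\item $J_j'(0)$ determined by $\partial_{z^j}\theta(z,a)$.
\end{itemize}
Suppose a combination $\sum c_jJ_j(s)+c\,\dot\gamma(s)$ vanished. Then the Jacobi field $Y=\sum c_jJ_j+c\,s\,\dot\gamma$ satisfies $Y(s)=0$.

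Rather than argue about conjugate points directly, I would invoke \cite{LOSU20}*{Lemma~3.1}. As I recall, it states precisely that in semigeodesic coordinates the map $(z,s)\mapsto\gamma_{z,a}(s)$ is a local diffeomorphism near $\gamma_0$, that is, the lightlike geodesics with fixed $a$ form a local foliation. Shrinking $Z$, $A$ and the $s$-interval as in the localization above gives the invertibility of $(\partial x/\partial z,\dot\gamma)$ on the relevant region, and hence surjectivity of $d\pi_M$.

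If one wanted a self-contained argument, one could check the point $s=0$ directly. There $\partial_{z^j}\gamma=\partial_{z^j}$ spans $TH$, and $\dot\gamma=(1,\theta)$ is transversal to $H=\{t=0\}$, so these $n+1$ vectors span $T_xM$. This gives rank $n+1$ near $s=0$. Rank is an open condition, and compactness of the localized region would then be used (possibly after further shrinking).

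\textbf{Main obstacle.} The difficulty is the surjectivity of $d\pi_M$ for $s$ away from $0$, along the whole segment $(0,\ell)$. I would handle this by appealing to \cite{LOSU20}*{Lemma~3.1}, rather than reproving the absence of degeneracy of the $z$-family of Jacobi fields.
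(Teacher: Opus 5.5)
Your handling of $\pi_{\mathcal{M}}$ is correct and matches the paper, which reads it off the identity blocks of the differential and then cites \cite{LOSU20}*{Lemma~3.1} for $\pi_M$. The gap is in $\pi_M$. You drop the $a$-columns and rely on the claim that $\partial_{z^1}\gamma_{z,a}(s),\dots,\partial_{z^n}\gamma_{z,a}(s),\dot\gamma_{z,a}(s)$ alone span $T_{\gamma_{z,a}(s)}M$, i.e.\ that for fixed $a$ the geodesics $\gamma_{z,a}$ foliate a neighborhood of $\gamma_0$. That is false in general, because the $n$-parameter congruence $\{\gamma_{z,a}\}_z$ can have focal points inside $(0,\ell)$. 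Already in Minkowski space (take $n\geq 2$, $\theta_0=e_n$), consider the admissible chart $\theta(z,a)=(p(a)-z)/|p(a)-z|$ with $p(a)=(a,\ell')$ and $0<\ell'<\ell$. Then $\gamma_{z,a}(s)=(s,z+s\theta)$, and at $s=|p(a)-z|$ one gets $\partial_z(z+s\theta)=\theta\theta^{T}$. So $(\partial x/\partial z,\ \dot\gamma)$ has rank $2<n+1$: every $z$ on a sphere around $p(a)$ is sent to the same point. Curvature produces the same effect for ``coordinate-parallel'' charts, e.g.\ on $\R\times S^n$.

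Shrinking $Z$ and $A$ does not move the focal time away from $\ell'$. You also cannot shrink the $s$-interval, since $\Omega$ may contain $\gamma_0(\ell')$. For the same reason your fallback at $s=0$ does not close the gap: openness of the rank condition only gives a neighborhood of $s=0$, and compactness cannot carry it along $(0,\ell)$. What is actually needed, and what the paper takes from \cite{LOSU20} (cf.\ the proof of Lemma~\ref{lem:L^2cptmap}), is that $\partial_z\gamma$, $\partial_a\gamma$ and $\dot\gamma$ span together. The $a$-variations are exactly what compensates at focal points of the $z$-family.

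For a self-contained argument, keep all columns. Call a Jacobi field $J$ along $\gamma=\gamma_{z,a}$ null-preserving if $g(J',\dot\gamma)=0$. This condition is preserved along $\gamma$, because $\frac{d}{ds}g(J',\dot\gamma)=-g(R(J,\dot\gamma)\dot\gamma,\dot\gamma)=0$, so such fields form a space of dimension $2n+1$. The fields $\partial_{z^j}\gamma$ and $\partial_{a^k}\gamma$ (variations through null geodesics), together with $\dot\gamma$ and $s\dot\gamma$, lie in this space. Their initial data $(J(0),J'(0))$ are $(\partial_{z^j},\ast)$, $(0,(0,\partial_{a^k}\theta))$, $((1,\theta),0)$ and $(0,(1,\theta))$. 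These are linearly independent, since $(1,\theta)$ is transversal to $T_0H$ and the $\partial_{a^k}\theta$ are independent, so the fields form a basis. Now, given $w\in T_{\gamma(s_0)}M$, the Jacobi field with $J(s_0)=w$ and $J'(s_0)=0$ is null-preserving. Hence $w$ is a combination of $\partial_z\gamma(s_0)$, $\partial_a\gamma(s_0)$ and $\dot\gamma(s_0)$; the $s\dot\gamma$ term evaluates into $\mathrm{span}\,\dot\gamma(s_0)$. This gives surjectivity of $d\pi_M$ at every $s\in(0,\ell)$, with no restriction from conjugate or focal points.
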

Let
\[
\Sigma_{s}=\{(x,\xi) \in T^{*}M:|\xi|_{g}>0\}, \quad \Sigma_{t}=\{(x,\xi) \in T^{*}M:|\xi|_{g}<0\},
\]
denote the spacelike and timelike cone on the phase space, respectively. Let $\pi \colon C \to T^{*}\Omega$ be the projection, where $C$ is as in \eqref{eq:canonical_relation}. If $(x,\xi) \in \pi (C)$, then for some lightlike geodesic $\gamma_{z,a}$ through $x$ we have $0=\langle \xi,\dot{\gamma}_{z,a}(s) \rangle=g(\xi^{\sharp},\dot{\gamma}_{z,a}(s))$. Since $\gamma_{z,a}$ is lightlike, we conclude that $\xi^{\sharp}$ is spacelike or null and parallel to $\dot{\gamma}_{z,a}$. In particular, $\pi(C) \cap \Sigma_{t}=\emptyset$. By wavefront set calculus for Fourier integral operators, we have $\wf(L_\kappa f) \subset C \circ \wf(f)$, see \cite{HormanderIV}*{Equation (25.2.2)}. We conclude that 
\begin{lemma} \label{lemma:wf}
    If $\wf(f) \subset \Sigma_{t}$, then $\wf(L_{\kappa}f)=\emptyset$.
\end{lemma}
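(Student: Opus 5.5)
The argument follows the Minkowski case (Lemma \ref{lemma:wf_minkowski}); the only additional ingredient is that $L_\kappa$ is a Fourier integral operator associated with $C=N^*X$. First, I would write the Schwartz kernel of $L_\kappa$ as a distribution conormal to the point-geodesic relation $X$. In the local parametrization $(z,a,s)\mapsto (z,a,\gamma_{z,a}(s))$ it is given by
\[
\langle L_\kappa f,\phi\rangle=\int \kappa(\gamma_{z,a}(s),\dot\gamma_{z,a}(s))\,f(\gamma_{z,a}(s))\,\phi(z,a)\,ds\,dz\,da.
\]
This is a smooth density on the embedded submanifold $X$ of $\mathcal{M}\times\Omega$, pushed forward as a delta-type distribution. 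Hence its wave front set is contained in $N^*X\setminus 0$, and $L_\kappa$ is an FIO with canonical relation $C$ as in \eqref{eq:canonical_relation}.

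Second, I would check that no covector in $C$ is zero on either side. This is what allows \cite{HormanderIV}*{(25.2.2)} (equivalently \cite{HormanderI}*{Thm.~8.2.13}) to apply to $f\in\mathcal{E}'(\Omega)$ and give
\[
\wf(L_\kappa f)\subset C\circ\wf(f)=\{(z,a,\zeta,\alpha): \exists (x,\xi)\in\wf(f),\ ((z,a,\zeta,\alpha),(x,\xi))\in C\}.
\]
Both degenerate cases follow from Lemma \ref{lem:sumbersions}. If $\xi=0$ in \eqref{eq:canonical_relation}, then $\zeta=\alpha=0$, so the point is not in $T^*(\mathcal{M}\times\Omega)\setminus 0$. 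If $(\zeta,\alpha)=0$, then $(0,\xi)$ is conormal to $X$. Because $\pi_{M}$ is a submersion, the range of the differential of $X\to M$ is all of $T_xM$, so $\xi$ annihilates $T_xM$ and $\xi=0$. Thus $C\subset (T^*\mathcal{M}\setminus0)\times(T^*\Omega\setminus0)$, and the composition rule applies with no extra zero-section terms.

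Third, I would show that $\pi(C)\cap\Sigma_t=\emptyset$, using the causal argument already given before the statement. If $((z,a,\zeta,\alpha),(x,\xi))\in C$, then $\xi$ annihilates the null vector $\dot\gamma_{z,a}(s)$, so $\xi^\sharp$ is $g$-orthogonal to a null vector. A timelike vector is never orthogonal to a nonzero null vector: in an orthonormal frame in which the timelike vector is $e_0$, the inner product with a null vector is minus its $e_0$-component, which cannot vanish. Hence $\xi^\sharp$ is spacelike or null. Therefore, if $\wf(f)\subset\Sigma_t$, no point of $\wf(f)$ lies in $\pi(C)$, so $C\circ\wf(f)=\emptyset$, and consequently $\wf(L_\kappa f)=\emptyset$, that is, $L_\kappa f$ is smooth.

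The only delicate point is the second step, which confirms that the canonical relation avoids the zero sections so the wave front calculus gives the clean inclusion; it reduces to the submersion property in Lemma \ref{lem:sumbersions}. The remaining steps are routine.
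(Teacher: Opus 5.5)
Your proof is correct and follows the paper's route. You show $\pi(C)\cap\Sigma_t=\emptyset$ because $\xi$ must annihilate the null vector $\dot\gamma_{z,a}(s)$, and then conclude from the FIO wavefront inclusion $\wf(L_\kappa f)\subset C\circ\wf(f)$ of H\"ormander's (25.2.2). Your extra check, via Lemma~\ref{lem:sumbersions}, that $C$ meets neither zero section is exactly the hypothesis that justifies that inclusion, a point the paper leaves implicit.
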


This gives the regularity property that will be needed for the instability result. However, we also need a mapping property. The first one follows from the Sobolev embedding.

\begin{lemma}\label{lem:L^2cptmap}
Let $S\Subset \Omega$ be compact, and define
\[
Q_{S}=\{(z,a)\in \overline{\mathcal{M}}: \gamma_{z,a}([0,\ell])\cap S\neq \emptyset\}.
\]
Then $Q_{S}$ is compact, and we have the following mapping property 
\[
L_{\kappa} \colon H^{0}_{S}(\Omega)\to H^{0}_{Q_{S}}(\mathcal{M}).
\]
\end{lemma}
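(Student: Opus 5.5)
The plan is to mimic the proof of Lemma~\ref{lem:Lmap}, replacing the explicit change of variables $y=z+s\theta$ by the parametrization $\Phi\colon (z,a,s)\mapsto \gamma_{z,a}(s)$ of the point-geodesic relation. Three things need to be shown: compactness of $Q_S$, the support property, and the $L^2$ bound.

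\emph{Compactness.} The map $\overline{\mathcal{M}}\times[0,\ell]\ni(z,a,s)\mapsto\gamma_{z,a}(s)$ is continuous, by smooth dependence of geodesics on initial data. Hence the set $\{(z,a,s):\gamma_{z,a}(s)\in S\}$ is closed in the compact set $\overline{\mathcal{M}}\times[0,\ell]$, so it is compact. Its projection to $\overline{\mathcal{M}}$ is exactly $Q_S$, which is therefore compact.

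\emph{Support.} By density it suffices to take $f\in C_0^\infty(\Omega)$ with $\supp f\subset S$. If $L_\kappa f(z,a)\neq0$, then some $\gamma_{z,a}(s)$ lies in $S$, so $(z,a)\in Q_S$. Thus $\supp L_\kappa f\subset Q_S$.

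\emph{$L^2$ bound.} Cauchy--Schwarz on $[0,\ell]$ gives
\[
|L_\kappa f(z,a)|^2\le \ell\,\sup_{\overline{\mathcal{M}}\times[0,\ell]}|\kappa(\gamma,\dot\gamma)|^2\int_0^\ell |f(\gamma_{z,a}(s))|^2\,ds ,
\]
where the supremum is finite by compactness. Integrating over $\mathcal{M}$ reduces the problem to bounding $\int_{\mathcal{M}\times(0,\ell)}|f\circ\Phi|^2\,dz\,da\,ds$ by $C\|f\|_{L^2}^2$.

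For this, note that $\Phi\colon\mathcal{M}\times(0,\ell)\to\Omega$ is a submersion, which is the content of Lemma~\ref{lem:sumbersions}, since $\pi_M\colon X\to M$ is a submersion and $X$ is parametrized diffeomorphically by $(z,a,s)$. The key step is then the standard fact that pullback by a submersion is bounded on $L^2$ over compact sets. I would prove it by the coarea formula (or local straightening): locally write $\Phi$ as a projection $(y,w)\mapsto y$ with $y\in\R^{1+n}$ and fiber variable $w\in\R^{n-1}$, with a smooth positive Jacobian factor. This gives
\[
\int_{K}|f\circ\Phi|^2 \le C_K\|f\|_{L^2(\Omega)}^2
\]
for any compact $K\subset\mathcal{M}\times(0,\ell)$.

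The main obstacle is the lack of compactness: $\mathcal{M}\times(0,\ell)$ is not compact, and the submersion property is only guaranteed after the localization made earlier. I would handle this using the standing assumption that the closure $\overline{\mathcal{M}}\times[0,\ell]$ lies in the region where $\Phi$ is defined and submersive (shrinking $Z$ and $A$ if necessary). Since $\gamma_{z,a}(0)$ and $\gamma_{z,a}(\ell)$ lie outside $\overline\Omega\supset S$, the set $\Phi^{-1}(S)$ stays away from $s=0$ and $s=\ell$. Hence $\Phi^{-1}(S)$ is compact, the Jacobian bound holds uniformly on it via a finite cover by straightening charts, and the estimate follows. Density then extends $L_\kappa$ to a bounded map $H^0_S(\Omega)\to H^0_{Q_S}(\mathcal{M})$.
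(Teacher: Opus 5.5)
Your proposal is correct and follows essentially the same route as the paper. $Q_S$ is compact because it is the projection of the compact set $\Phi^{-1}(S)\subset\overline{\mathcal{M}}\times[0,\ell]$. The $L^2$ bound comes from Cauchy--Schwarz along each geodesic, followed by a bound on $\int|f\circ\Phi|^2$ that uses the fact that $\Phi$ is a submersion near the compact set $\Phi^{-1}(S)$. The paper does this last step with the coarea formula and a bound on $\rho(x)=\int_{\Phi^{-1}(x)}\chi/J_\Phi\,d\sigma_x$. Your finite cover by straightening charts is the same argument in local form, and it makes the uniform control of the fiber contributions slightly more transparent.
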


\begin{proof}
We first prove that $Q_{S}$ is compact. Let
\[
E_{S}=\{(z,a,s)\in\overline{\mathcal{M}}\times[0,\ell]:
 \gamma_{z,a}(s)\in S\}.
\]
Since $\Phi$ is continuous and $S$ is compact, $E_{s}$ is a closed subset of a compact set, hence it is compact itself. The compactness of $Q_{S}$ now follows from the fact that it is the projection of $E_{S}$ under $\overline{\mathcal{M}} \times [0,\ell] \to \overline{\mathcal{M}}$. 

Now we deal with the mapping property.  First, since $\overline{\mathcal{M}}\times [0,\ell]$ is compact and $\kappa$ is smooth, we have
\[
C_{\kappa}:=\sup_{\substack{(z,a)\in \overline{\mathcal{M}}\\ s\in[0,\ell]}}|\kappa(\gamma_{z,a}(s),\dot{\gamma}_{z,a}(s))|<\infty.
\]
Let $f \in H_{S}^{0}(\Omega)$. The Cauchy-Schwarz inequality gives
\begin{equation} \label{eq:ineqCS}
    |L_{\kappa} f(z,a)|^2 \leq \ell C_
{\kappa}^{2} \int_{0}^{\ell}|f(\gamma_{z,a}(s))|^{2} ds.
\end{equation}
Note that by \cite{LOSU20}*{Lemma 3.1}, the evaluation map
\[
\Phi \colon \overline{\mathcal{M}}\times[0,\ell]\to M, \qquad \Phi(z,a,s)=\gamma_{z,a}(s),
\]
is a submersion on \(\Phi^{-1}(S)\). Indeed, that $\Phi$ is a submersion means that $d\Phi(z,a,s) \colon T_{(z,a,s)}(\mathcal{M}\times[0,\ell]) \to T_{\gamma_{z,a}(s)}M$ has full rank $n+1$ whenever $\gamma_{z,a}(s)\in S$. Equivalently,
\[
\mathrm{span}\{\dot{\gamma}_{z,a}(s), \partial_{z^{1}}\gamma_{z,a}(s),\ldots,\partial_{z^{n}}\gamma_{z,a}(s),\partial_{a^{1}}\gamma_{z,a}(s),\ldots, \partial_{a^{n-1}}\gamma_{z,a}(s)\} =T_{\gamma_{z,a}(s)}M.
\]
Thus the condition says that these Jacobi fields $\partial_{z^{j}}\gamma_{z,a}(s)$, $\partial_{a^{k}}\gamma_{z,a}(s)$, together with $\dot\gamma_{z,a}$, span the whole tangent space along the part of the geodesic meeting $S$, which is exactly the conclusion of the cited result. 

Now observe that since $\Phi^{-1}(S)$ is compact, and the submersion property is open, there exists an open neighborhood $U$ of $\Phi^{-1}(S)$ such that $\Phi$ is a submersion on $U$. Take $\chi\in C_{0}^{\infty}(U)$ such that $\chi=1$ on a neighborhood of $\Phi^{-1}(S)$. Since $\supp f\subset S$, we have $|f(\Phi(z,a,s))|^{2} = \chi(z,a,s)|f(\Phi(z,a,s))|^{2}$.

Then,
\[
\begin{split}
    \|L_{\kappa}f\|_{L^2(\mathcal{M})}^2 &\leq \ell C_{\kappa}^{2}\int_{\mathcal{M}}\int_{0}^{\ell} |f(\gamma_{z,a}(s))|^{2} dsdzda \\
    &=\ell C_{\kappa}^{2}\int_{\mathcal M}\int_{0}^{\ell} |f(\Phi(z,a,s))|^{2} dsdzda \\
    &=\ell C_{\kappa}^{2}\int_{\mathcal M}\int_{0}^{\ell} \chi (z,a,s) |f(\Phi(z,a,s))|^{2} dsdzda \\
    &=\ell C_{\kappa}^{2}\int_{S} |f(x)|^2 \rho(x) dV(x),
\end{split}
\]
where the first inequality follows from integrating \eqref{eq:ineqCS} over $\mathcal{M}$, on the second line we used the definition of $\Phi$, on the third line we used the fact that since $\supp(f)\subset S$ (then the integrand is supported in $\Phi^{-1}(S)$), and in the last step we used the fact that $\Phi$ is a submersion on $\Phi^{-1}(S)$ and we applied the co-area formula (see for instance \cite{EG25}*{\S 3.4.3}),
where
\[
\rho(x)=\int_{\Phi^{-1}(x)} \frac{\chi(z,a,s)}{J_\Phi(z,a,s)}d\sigma_{x}(z,a,s).
\]
Here $J_{\Phi}$ is the normal Jacobian of $\Phi$, and $d\sigma_{x}$ is the induced measure on the fiber $\Phi^{-1}(x)$.

Since $\Phi^{-1}(S)$ is compact and $\Phi$ is a submersion there, $J_{\Phi}$ is bounded away from zero on $\Phi^{-1}(S)$. Furthermore, by definition of $\chi$, this function is constant over there. Moreover, the fibers $\Phi^{-1}(x)$, $x \in S$, have uniformly bounded $(n-1)$-dimensional volume since $\Phi$ is continuous and $S$ is compact. Hence $\rho$ is bounded on $S$. Therefore,
there exists $C>0$ such that $\rho(x) \leq C$ for any $x \in S$. Combining this with the previous inequalities, we obtain that
\[ 
\|L_{\kappa}f\|_{L^{2}(\mathcal{M})}^{2} \leq \ell C_{\kappa}^{2}C \|f\|_{L^{2}(\Omega)}^{2}.
\]
The support statement follows from the definitions.    
\end{proof}

Finally, we are able to state and prove the instability result, which is the analog of Theorem \ref{thm:instability} (a), for smooth manifolds.

\begin{thm}\label{thm:manCinftyinsta}
Let $S\Subset \Omega$ be compact with nonempty interior, and let
\[
Q_S=\{(z,a)\in \overline{\mathcal{M}} : \gamma_{z,a}([0,\ell])\cap S\neq \emptyset\}.
\]
Let $\delta,r>0$, and $K_{r}=\{ f \in H_{S}^{\delta}(\Omega) : \|f\|_{H^{\delta}(\Omega)}\leq r\}$. Suppose that $\omega$ is a modulus of continuity such that
\[
\|f_{1}-f_{2}\|_{L^{2}(\Omega)} \leq \omega (\|L_{\kappa}f_{1}-L_{\kappa} f_{2}\|_{L^{2}(\mathcal{M})}), \qquad f_{1},f_{2}\in K_{r}.
\]
For every $\alpha \in (0,1)$, $\omega(t) \gtrsim t^\alpha$ for small $t$.
\end{thm}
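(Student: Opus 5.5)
The plan is to mimic the proof of Theorem~\ref{thm:instability}(a), replacing the Minkowski ingredients by their manifold counterparts (Lemmas~\ref{lemma:wf} and~\ref{lem:L^2cptmap}), and then invoke Theorem~\ref{thm:general_insteability_localized}(a) with $s=0$, $t=0$, $M$ replaced by $\Omega$ (embedded in a closed manifold as in Definition~\ref{defin:localized_spaces}), $N=\mathcal{M}$, $Q=Q_S$ and $F=L_\kappa$. Lemma~\ref{lem:L^2cptmap} guarantees that $F$ maps $K_r$ into $H^0_{Q_S}(\mathcal{M})$ with $Q_S$ compact, so the hypotheses on $F$ itself are met.

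First, since $S$ has nonempty interior, we fix $x_0\in S^{\mathrm{int}}$ and a timelike covector $\xi_0\in T^*_{x_0}M\setminus 0$, i.e.\ $|\xi_0|_g<0$; this cone is open and nonempty at every point. Next we choose $\psi\in C_0^\infty(S^{\mathrm{int}})$ with $\psi(x_0)=1$, and a classical symbol $\chi\in S^0_{\mathrm{cl}}$ (defined in local coordinates near $x_0$) supported in a small closed conic neighborhood $\Gamma$ of $(x_0,\xi_0)$ with $\Gamma\subset\Sigma_t$ and $\chi(x_0,\xi_0)\neq0$. We then set $P=\varepsilon\,\psi\Op(\chi)\psi$. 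This $P$ lies in $\Psi^0_{\mathrm{cl}}$, is noncharacteristic at $(x_0,\xi_0)$, and satisfies $\supp Pf\subset S$. Boundedness of $P$ on $H^\delta$, with $\varepsilon$ small, gives some $r_0>0$ with $P(K_{r_0})\subset K_r$.

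Next, we need $L_\kappa\circ P$ to be smoothing. Since $\wf(Pf)\subset\Gamma\subset\Sigma_t$ for every $f\in\mathcal{E}'$, Lemma~\ref{lemma:wf} gives $L_\kappa Pf\in C^\infty$. To obtain bounded images in $H^m_{Q_S}(\mathcal{M})$ uniformly over $K_{r_0}$, rather than mere smoothness for each $f$, I would argue at the level of Schwartz kernels: $\wf'(P)\subset\Gamma\times\Gamma$ up to the diagonal, and $C\circ\wf'(P)=\emptyset$ because $\pi(C)\cap\Sigma_t=\emptyset$. The composition calculus (H\"ormander's Theorem~8.2.14) then shows that the kernel of $L_\kappa\circ P$ is smooth. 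Its support lies in $Q_S\times S$, which is compact, so $L_\kappa\circ P\colon L^2\to H^m_{Q_S}(\mathcal{M})$ is bounded for every $m$. Since $K_{r_0}$ is bounded in $L^2$, its image is bounded in each $H^m_{Q_S}$. Theorem~\ref{thm:general_insteability_localized}(a) then yields $\omega(t)\gtrsim t^\alpha$ for every $\alpha\in(0,1)$.

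The main obstacle is justifying this composition cleanly. $L_\kappa$ is only defined locally on $\mathcal{M}\times\Omega$, and one must check that the kernel of $P$ has no wavefront set of the form $(x,0;y,\eta)$ or $(x,\xi;y,0)$, which holds for pseudodifferential operators. One must also check that $C$ contains no elements with vanishing $\mathcal{M}$-component over nonzero $\xi$. The latter follows from Lemma~\ref{lem:sumbersions}: because $\pi_{\mathcal M}$ and $\pi_M$ are submersions, $N^*X$ has no zero sections in either factor.
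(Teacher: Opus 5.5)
Your proposal is correct and follows the paper's proof essentially step by step. It uses the same microlocal cutoff $P=\varepsilon\psi\Op(\chi)\psi$ into the timelike cone, the same appeal to Lemmas~\ref{lemma:wf} and~\ref{lem:L^2cptmap} to make $L_\kappa\circ P$ smoothing with image supported in $Q_S$, and then Theorem~\ref{thm:general_insteability_localized}(a). Your kernel-level argument via H\"ormander's composition theorem, using Lemma~\ref{lem:sumbersions} to exclude zero components in $N^*X$, simply spells out the paper's one-line claim that $C\circ\wf(P)=\emptyset$ makes $L_\kappa\circ P$ smoothing with bounded image in every $H^m_{Q_S}(\mathcal{M})$.
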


\begin{proof}
Choose a point $x_{0} \in S^{\mathrm{int}}$ and let $\xi_{0}$ be a timelike co-vector so that $(x_{0},\xi_{0}) \in T^{*}\Omega \setminus \{0\}$. Let $\psi \in C_{0}^{\infty}(S^{\mathrm{int}})$ satisfy $\psi(x_{0})=1$, and let $\chi \in S_{\mathrm{cl}}^0(T^*\Omega)$ be a classical symbol supported in a sufficiently small conic neighborhood of $(x_{0},\xi_{0})$, contained in the timelike cone $\Sigma_{t}$, with $\chi(x_{0},\xi_{0}) \neq 0$. Let $P=\eps \psi \Op(\chi)\psi$,
where $\eps>0$ will be chosen small. Then
\begin{enumerate}
    \item $P \in \Psi^0_{\mathrm{cl}}(\Omega)$,
    \item $P$ is noncharacteristic at $(x_{0},\xi_{0})$,
    \item $\supp(Pf)\subset \supp\psi\subset S$, for every distribution $f$.
\end{enumerate}
Since $P$ is bounded on $H^{\delta}(\Omega)$, choosing $\eps>0$ small enough we have $P(K_{r_{0}})\subset K_{r}$ for some $r_{0}>0$. Since $\wf(P) \subset \Sigma_{t}$, by Lemma \ref{lemma:wf} and Lemma~\ref{lem:L^2cptmap} we obtain that $C \circ \wf(P)=\emptyset$. Hence, $L_{\kappa} \circ P$ is smoothing. Thus, for every $m>0$,
\[
L_{\kappa} \circ  P \colon K_{r_{0}} \to H_{Q_{S}}^{m}(\mathcal{M})
\]
has bounded image. The result now follows from Theorem \ref{thm:general_insteability_localized} (a).    
\end{proof}

\begin{rmk}
By \cite{LOSU20}*{Theorem~3.1}, one can construct $\chi \in \Psi^{0}$ so that $N=L_{\kappa}^{*}\chi L_{\kappa} \in \Psi^{-1}$ with essential support in the spacelike cone, and if $\kappa$ is non-vanishing, one can even choose $\chi$ so that $N$ is elliptic for any $(x_{0},\xi_{0}) \in \Sigma_{s} \cap N^{*}\gamma_{0}$. This again gives the correct mapping properties between Sobolev spaces, which can be used to conclude an instability result for $N$ using Theorem \ref{thm:general_insteability_localized} (a). Observe, however, that one cannot get a mapping property of the type $\chi^{1/2}L_{\kappa} \colon H^{s}(\Omega) \to H^{s+1/2}(\mathcal{M})$ from the ones of $L_{\kappa}^{*}\chi L_{\kappa}$. Indeed, this would require mapping properties for $L_{\kappa}^{*}\chi (1-\Delta_{\mathcal{M}})^{s+1/2}L_{\kappa}$, which is a priori not a pseudodifferential operator.
\end{rmk}

\section{Instability in the Gevrey category} \label{section:gevrey}

Fix $\sigma > 1$ and $\kappa \in G^\sigma(T M\setminus 0)$ positively homogeneous of degree $1$ in the fiber variable and assume now that the Lorentzian manifold $(M,g)$ is $G^\sigma$. Let $\Sigma_t$ and $X$ be as in the previous section. According to \cite{MST23}, \cite{LOSU20} (in particular Lemma~\ref{lem:sumbersions}) the light ray transform \eqref{eq:Llorentz} satisfies the assumptions of Theorem~\ref{thm:doublefibFIO} (it is a double fibration transform). Thus, using the oscillatory integral representation of $L_\kappa$, as well as \cite{KRS21}*{Proposition~D.3} we find as before that
\[
    \wf_\sigma(L_\kappa) \subset N^\ast X\setminus 0\,.
\]
An application of Lemma~\ref{lem:sumbersions} and \cite{MST23}*{Lemma~5.3} together with \cite{HormanderI}*{Theorem~8.5.5} imply that the same discussion leading to Lemma~\ref{lemma:wf} gives
\begin{lemma} \label{lemma:wfsigma}
    If $\wf_\sigma(f) \subset \Sigma_{t}$, then $\wf_\sigma(L_{\kappa}f)=\emptyset$.
\end{lemma}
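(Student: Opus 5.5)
The plan is to run the smooth argument behind Lemma~\ref{lemma:wf} again, with the Gevrey wavefront calculus of \cite{HormanderI}*{Theorem~8.5.5} replacing the $C^\infty$ FIO calculus. Write the Schwartz kernel of $L_\kappa$ as $K_L = \kappa\,\delta_X$ in the coordinates $(z,a,x)$ on $\mathcal{M}\times\Omega$. Since $\kappa$ and the geodesic flow of the $G^\sigma$ metric $g$ are $G^\sigma$, the submanifold $X$ is $G^\sigma$. Hence $K_L$ is a $G^\sigma$ conormal distribution, and by the inclusion quoted just before the statement (from \cite{KRS21}*{Proposition~D.3}) we have $\wf_\sigma(K_L)\subset N^\ast X\setminus 0 = C$.

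\emph{Step 1: no zero sections.} Theorem~8.5.5 of \cite{HormanderI} gives
\[
\wf_\sigma(L_\kappa f)\subset \wf_\sigma(K_L)'\circ \wf_\sigma(f)\,\cup\,\wf_\sigma(K_L)_{\mathcal{M}},
\]
where $\wf_\sigma(K_L)_{\mathcal{M}}=\{(z,a,\zeta,\alpha)\colon (z,a,\zeta,\alpha;x,0)\in\wf_\sigma(K_L)\}$. To use it we must first check two things. Both follow from Lemma~\ref{lem:sumbersions}, which says that $\pi_{\mathcal{M}}$ and $\pi_M$ are submersions from $X$; equivalently, $N^\ast X$ contains no covectors of the form $(z,a,0;x,\xi)$ or $(z,a,\zeta,\alpha;x,0)$ with the other component nonzero.
\begin{itemize}
\item Because $N^\ast X$ has no such covectors, $\wf_\sigma(K_L)_{\mathcal{M}}=\emptyset$.
\item The composition with $\wf_\sigma(f)$ is well defined and proper on the relevant compact sets, since $f$ is compactly supported and $(z,a)$ ranges over $\overline{\mathcal{M}}$. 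This is the content of \cite{MST23}*{Lemma~5.3} for double fibration transforms.
\end{itemize}
So $\wf_\sigma(L_\kappa f)\subset C\circ\wf_\sigma(f)$.

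\emph{Step 2: timelike covectors are invisible.} Take a point $((z,a,\zeta,\alpha),(x,\xi))$ in $C$, as in \eqref{eq:canonical_relation}. Then $x=\gamma_{z,a}(s)$ and $\langle\xi,\dot\gamma_{z,a}(s)\rangle=0$ with $\dot\gamma_{z,a}(s)$ null. So $\xi^\sharp$ lies in the $g$-orthogonal complement of a null vector, which means it is spacelike or null; in particular $\xi\notin\Sigma_t$. Hence $\pi(C)\cap\Sigma_t=\emptyset$. If $\wf_\sigma(f)\subset\Sigma_t$, then $C\circ\wf_\sigma(f)=\emptyset$, and therefore $\wf_\sigma(L_\kappa f)=\emptyset$.

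The main obstacle is Step 1, namely justifying $\wf_\sigma(K_L)\subset N^\ast X$ and the applicability of Theorem~8.5.5 in the Gevrey class. Gevrey-$\sigma$ with $\sigma>1$ admits $G^\sigma$ cutoffs (\cite{HormanderI}*{Theorem~1.4.10}), so we can localize in $G^\sigma$ charts where $X=\{x=\gamma_{z,a}(s)\}$ is straightened. There we would write $K_L$ as an oscillatory integral $\int e^{i\langle x-\gamma_{z,a}(s),\eta\rangle}\tilde\kappa\,ds\,d\eta$ with a $G^\sigma$ phase and amplitude, and invoke \cite{KRS21}*{Proposition~D.3}. The rest is the same linear-algebra argument as in the smooth case.
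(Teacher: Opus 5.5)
Your proposal is correct and follows the paper's argument. The paper likewise gets $\wf_\sigma(L_\kappa)\subset N^\ast X\setminus 0$ from the $G^\sigma$ oscillatory-integral representation together with \cite{KRS21}*{Proposition~D.3}, and then uses Lemma~\ref{lem:sumbersions}, \cite{MST23}*{Lemma~5.3} and \cite{HormanderI}*{Theorem~8.5.5} to reduce to the same observation $\pi(C)\cap\Sigma_t=\emptyset$ as in Lemma~\ref{lemma:wf}. The only difference is cosmetic: the paper takes its representation from Theorem~\ref{thm:doublefibFIO}, with phase $(\phi(y,x')-x'')\cdot\eta$ homogeneous in $\eta$, and this fits Proposition~D.3 more directly than your integral with the extra non-homogeneous variable $s$.
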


We can now state
\begin{thm} \label{thm:gevrey-insta}
Let $\sigma>1$ and $S\Subset \Omega$ be compact with nonempty interior, and let
\[
Q_S=\{(z,a)\in \overline{\mathcal{M}} : \gamma_{z,a}([0,\ell])\cap S\neq \emptyset\}.
\]
Let $\delta,r>0$, and $K_{r}=\{ f \in H_{S}^{\delta}(\Omega) : \|f\|_{H^{\delta}(\Omega)}\leq r\}$. If $\omega$ is a modulus of continuity such that
\[
\|f_{1}-f_{2}\|_{L^{2}(\Omega)} \leq \omega (\|L_{\kappa}f_{1}-L_{\kappa} f_{2}\|_{L^{2}(\mathcal{M})}), \qquad f_{1},f_{2}\in K_{r}\,,
\]
then $\omega(t) \gtrsim \abs{\log t}^{-\frac{\delta((2n-1)\sigma+1)}{n+1}}$ for $t$ small.
\end{thm}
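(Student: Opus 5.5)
The plan is to mimic part (2) of the proof of Theorem~\ref{thm:instability}, replacing the Minkowski ingredients with their manifold counterparts. First I construct a Gevrey cutoff operator. Fix $x_0\in S^{\mathrm{int}}$ and a timelike covector $\xi_0\in T^*_{x_0}\Omega\setminus 0$. Choose $\psi\in G^\sigma\cap C_0^\infty(S^{\mathrm{int}})$ with $\psi(x_0)=1$; this is possible since $\sigma>1$. Choose a symbol $\chi$ of Gevrey type, satisfying the estimates $\abs{\partial_x^\beta\partial_\xi^\alpha\chi}\le C^{1+\abs{\alpha}+\abs{\beta}}(\alpha!\beta!)^\sigma(1+\abs{\xi})^{-\abs{\alpha}}$, with conic support in a small neighbourhood of $(x_0,\xi_0)$ inside $\Sigma_t$ and with $\chi(x_0,\xi_0)\neq0$. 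Since the timelike cone is defined via the $G^\sigma$ metric $g$, this can be done in a $G^\sigma$ coordinate chart. Set $P=\eps\psi\Op(\chi)\psi$.

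The properties of $P$ are then as follows. It lies in $\Psi^0_{\mathrm{cl}}(\Omega)$, is noncharacteristic at $(x_0,\xi_0)$, and satisfies $\supp Pf\subset S$. Choosing $\eps$ small gives $P(K_{r_0})\subset K_r$ for some $r_0>0$. By the Gevrey pseudodifferential calculus (\cite{HormanderI}*{\S~8.5}, \cite{Luigi}*{\S~3.4}) we also have $\wf_\sigma(Pf)\subset\Sigma_t$ for every $f\in\mathcal{E}'(\Omega)$.

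Next I establish the mapping properties of $L_\kappa\circ P$. Lemma~\ref{lemma:wfsigma} gives $\wf_\sigma(L_\kappa Pf)=\emptyset$, so $L_\kappa P f$ is $G^\sigma$ on $\mathcal{M}$. Lemma~\ref{lem:L^2cptmap}, together with the $L^2$-boundedness of $P$, shows that $L_\kappa\circ P\colon L^2(\Omega)\to L^2(\mathcal{M})$ is continuous with supports contained in $Q_S$.

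One must check that $\mathcal{M}$ carries a $G^\sigma$ structure. The chart $(z,a)$ comes from semigeodesic coordinates of a $G^\sigma$ metric and a parametrization of $SZ$. Geodesic flows of $G^\sigma$ metrics are $G^\sigma$, since solutions of Gevrey ODEs are Gevrey. So $\mathcal{M}$ is a $G^\sigma$ open subset of $\R^{2n-1}$, and $Q_S$ is a compact subset of it. Shrinking $\mathcal{M}$ slightly if necessary, $Q_S$ sits in the interior of a larger parameter domain, so Lemma~\ref{lem:upgradetoA} applies. It yields some $\rho>0$ such that
\[
L_\kappa\circ P\colon L^2(\Omega)\to A^{\sigma,\rho}_{Q_S}(\mathcal{M})
\]
is continuous. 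In particular, $L_\kappa\circ P$ maps $K_{r_0}$ into a bounded set of $A^{\sigma,\rho}_{Q_S}(\mathcal{M})$.

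Finally I apply Theorem~\ref{thm:general_insteability_localized}(b) with $s=t=0$, $F=L_\kappa$, $\dim N=\dim\mathcal{M}=2n-1$ and $\dim M=n+1$. This gives $\omega(t)\gtrsim\abs{\log t}^{-\delta((2n-1)\sigma+1)/(n+1)}$, the claimed bound.

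The main obstacle I expect is the step that turns the microlocal statement $\wf_\sigma(L_\kappa Pf)=\emptyset$ into global Gevrey regularity on a closed $G^\sigma$ manifold, as Lemma~\ref{lem:upgradetoA} requires. This means building a closed $G^\sigma$ manifold $N_1$ that contains a neighbourhood of $Q_S$, and checking that extension by zero preserves $G^\sigma$ regularity. For the latter I would argue that $L_\kappa Pf$ vanishes near the boundary of the parameter domain: geodesics from $(z,a)$ outside $Q_S$ miss $\supp\psi$, and a neighbourhood of such points also misses it by compactness. Hence $L_\kappa Pf$ is $G^\sigma$ on all of $N_1$ and supported in $Q_S$.
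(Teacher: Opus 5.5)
Your proposal is correct and follows the paper's proof essentially verbatim. You replace $P$ by a Gevrey-$\sigma$ cutoff microlocalized in $\Sigma_t$, combine Lemmas~\ref{lemma:wfsigma}, \ref{lem:L^2cptmap} and \ref{lem:upgradetoA} so that $L_\kappa\circ P$ maps $K_{r_0}$ boundedly into $A^{\sigma,\rho}_{Q_S}(\mathcal{M})$, and apply Theorem~\ref{thm:general_insteability_localized}(b) with $\dim\mathcal{M}=2n-1$ and $\dim M=n+1$.

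The step you flag as the main obstacle, extension by zero to a closed $G^\sigma$ manifold, is already carried out inside the proof of Lemma~\ref{lem:upgradetoA}. You also rightly note that $Q_S$ must lie in the interior of the parameter domain, which the paper leaves implicit. That requirement is met by enlarging $\mathcal{M}$ to an open set containing $\overline{\mathcal{M}}$, not by shrinking it. Enlarging is harmless because the data norm only grows and $\omega$ is nondecreasing.
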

The proof is a copy of the proof of Theorem~\ref{thm:manCinftyinsta}, wherein we must only modify the operator $P$ to match the Gevrey-$\sigma$ setting as we did in the proof of Theorem~\ref{thm:instability}. We  rely on Lemmas~\ref{lemma:wfsigma},~\ref{lem:L^2cptmap},~\ref{lem:upgradetoA} to obtain that there are some $r_0,\rho>0$ so that
\[
    L_\kappa \circ P\colon K_{r_0} \to A^{\sigma,\rho}_{Q_S}(\mathcal{M})
\]
is continuous, which allows the application of Theorem~\ref{thm:general_insteability_localized} (b).

\section{Stability of moments} \label{section:stability}

The goal of this section is to prove Theorem~\ref{thm:moment-stability-intro}. To this end, we will begin by briefly recalling the geometries therein and their relation with magnetic and $\MP$-systems, defined in Appendix~\ref{appendix:magnetic}. We recall that such relations have been used in \cites{Stefanov24, MT25, OPS25}. Here, we principally follow the second article in that list.

Consider a standard stationary manifold $(M,g)$, that is, $M=\R \times N$, and $g$ is of the form
\begin{equation} \label{eq:metric2}
    g=-\lambda(x)(dt-\omega_{i} dx^{i})^{2} +h_{ij}(x)dx^{i}dx^{j},
\end{equation}
where $\lambda \in C^{\infty}(N)$ is a positive function, $\omega$ is a 1-form in $N$, and $h$ is a Riemannian metric in $N$. Here $\langle \bullet,\bullet \rangle$ is reserved to denote the pairing of 1-forms with vectors. We will denote by $(\bullet,\bullet)_{g}$ the inner product with respect to $g$. The condition of $\gamma=(t,x)$ being a lightlike geodesic reads
\[ 
-\lambda(x)(\dot{t}-\langle \omega,\dot{x}\rangle)^{2}+|\dot{x}|_{h}^{2}=0.
\]
It is known that $\partial_{t}$ is a Killing field for $g$, hence, for any geodesic we have $(\dot{\gamma},\partial_{t})_{g}=\rho$ for some $\rho \in \R$. Explicitly, this means that
\begin{equation} \label{eq:const2}
    -\lambda(x)(\dot{t}-\langle \omega,\dot{x} \rangle)=\rho.
\end{equation}
We will say that $\gamma$ has \emph{momentum} $\rho$, nomenclature that comes from symplectic geometry and classical mechanics. 

We are interested in lightlike geodesics pointing into $N$. It is then natural to study 
\[ 
\partial T^{0}M=\{(t,x,v_{t},v_{x}):(t,x) \in \partial M, \, |(v_{t},v_{x})|_{g}=0\}.
\]
However, we will require a more precise description. We consider lightlike directions in the boundary with momentum $\rho$:
\begin{align*}
    \partial T^{0,\rho}M  :&=\{(t,x,v_{t},v_{x}):(t,x) \in \partial M, \, |(v_{t},v_{x})|_{g}=0, \, ( \partial_{t},(v_{t},v_{x}) )_{g}\equiv \rho\} \\
    &=\{(t,x,\langle \omega,v_{x}\rangle-\rho/\lambda(x),v_{x}):(t,x) \in \partial M, \, |(v_{t},v_{x})|_{g}=0\}.
\end{align*}
Note that when $\rho=0$, $\partial T^{0,\rho}M$ is just the zero section, which gives rise to constant geodesics, so  from now on, we assume $\rho \neq 0$. Now we consider the $\MP$-system $(N,h,d(\rho \omega),\rho^{2}/(-2\lambda))$, see \S\ref{subsection:mp} for the definition. Given $k \in \R$, the $k$-energy bundle and the $k$-influx boundary take the form
\begin{align*}
    S_{\rho}^{k}N &:=S^{k}N=\left\{ (x,v_{x}) \in TN: \frac{1}{2}|v_{x}|_{h}^{2}+\frac{\rho^{2}}{-2\lambda}=k \right\}, \\
    \partial_{+}S_{\rho}^{k}N &:=\partial_{+}S^{k}N =\{(x,v_{x}) \in S^{k}N:x \in \partial N, \, (v_{x},\nu_{x}(x))_{h} \geq 0\} ,
\end{align*}
respectively, where $\nu_{h}(x)\in T_{x}N$ denotes the inward $h$-unit normal to $\partial N$. We also set $\nu_{g}=(\langle \omega_{x} ,\nu_{h}(x)\rangle,\nu_{h}(x))$, i.e., $\nu_{g}$ is the inward $g$-unit normal to $\partial M=\R \times \partial N$. With these notations, we see that $g((v_{t},v_{x}),\nu_{g})=h(v_{x},\nu_{h})$ and 
\[
\partial_{+}T^{0,\rho}M:=\{(t,x,v_{t},v_{x}) \in \partial T^{0,\rho}M:((v_{t},v_{x}),\nu_{g}(x))_{g} \geq 0\},
\]
is just the graph of $(t,x,v_{x}) \mapsto \langle \omega,v_{x}\rangle -\rho/\lambda(x)$ over $\R \times \partial_{+}S_{\rho}^{0}N$. Thus $\partial_{+}T^{0,\rho}M$ is naturally identified with $\R\times \partial_{+}S_\rho^0N$. In particular, each point $(T,X,V_{x})\in \R \times \partial_{+}S_{\rho}^{0}N$ determines a unique lightlike geodesic $\gamma=\gamma_{T,X,V_x}$ with momentum $\rho$ and initial conditions $(\gamma(0),\dot{\gamma}(0))=(T,X,\langle \omega,V_{x}\rangle -\rho/\lambda(x),V_{x})$. Furthermore, we have that $(\gamma(s),\dot{\gamma}(s))$ is given by 
\begin{equation} \label{eq:lightlike-geo-rho}
    \left( T+\int_{0}^{s}  \left( \langle\omega_{x(\sigma)},\dot{x}(\sigma) \rangle -\frac{\rho}{\lambda(x(\sigma))} \right) d\sigma,x(s),\langle \omega_{x(s)},\dot{x}(s)\rangle-\frac{\rho}{\lambda(x(s))},\dot{x}(s) \right),
\end{equation}
where $\dot{x}(0)=V_{x}$. Now we make the connection between the lightlike ray transform on $(M,g)$ and the $\MP$-ray transform on $(N,h,d(\rho \omega),\rho^{2}/(-2\lambda))$, i.e., the operator that integrates functions over $\MP$-geodesics, we refer to \S\ref{subsection:mp} for its definition. We begin by using \cite{FIO21}*{Equation~(16)}, which states that for any $f\in C_{0}^{\infty}(M)$ and $\tau \in \R$ we have
\begin{equation} \label{eq:lightray_fourier}
    \int_{\R}e^{-i \tau T}L_{\gamma}fdT=\int_{I}e^{i\tau t(s)}\mathcal{F}_{t}(f)(\tau,x(s))ds,
\end{equation}
where $\gamma=\gamma_{T,X,V_{x}}$ with $(T,X,V_{x}) \in \partial_{+}T^{0,\rho}M$, $I$ is the interval of definition of $\gamma$, and $t(s)$ is the second term in the first entry of \eqref{eq:lightlike-geo-rho}. In particular, taking $\tau=0$ we find
\[ 
\int_{\R} L_{\gamma}f dT=I_{\mathcal{MP}}([\mathcal{F}_{t}f](0,\bullet)),
\]
where the operator on the right-hand side is the $\mathcal{MP}$-ray transform, see \S\ref{subsection:mp}. Observe that the left-hand side can be thought of as the zero moment of $L_{\gamma}f$. Thus,
\[
|I_{\MP}[\mathcal{F}_{t}f](0,\bullet)| \leq \|L_{\gamma}f\|_{L^{1}(\R)},
\]
for any $(X,V_{x}) \in \partial_{+}S^{0}N$. Hence,
\[ 
\norm{I_{\MP}(\mathcal{F}_{t}f)(0,\bullet)}_{H^{1/2}(\partial_{+}S^{0}N)} \leq \norm{L_{\gamma}f}_{H_{X,V_{x}}^{1/2}L_{T}^{1}}.
\]
Furthermore, we can use the relation between $\MP$-systems and magnetic ones. Let $(N,H,d(\rho \omega))$ be the magnetic reduction of $(N,h,d(\rho \omega),\rho^{2}/(-2\lambda))$, defined in \S\ref{subsection:magnetic}. Let $I_{\mu}$ be the corresponding magnetic ray transform. By \cite{MT24}*{Proposition 3.7}, we have
\[ I_{\MP}f(X,V_{x})=I_{\mu}(f/(2k+\rho^{2}/\lambda))(X,V_{x}/(2k+\rho^{2}/\lambda)), \]
so that 
\[ 
\|I_{\mu}(\mathcal{F}_{t}f)(0,\bullet)\|_{H^{1/2}(\partial_{+}SN)} \leq C\norm{L_{\gamma}f}_{H_{X,V_{x}}^{1/2}L_{T}^{1}}.
\]
Therefore, to obtain a stability estimate for the light ray transform, it is enough to obtain a stability result for $I_{\mu}$. Before taking moments, $L_{\gamma_{T,X,V}}f$ is a function of $(T,X,V)\in \R \times \partial_{+}S_{\rho}^{0}N$, which is identified with $\partial_{+}T^{0,\rho}M$. Hence, after integration in $T$, we obtain functions over $\partial_{+}S_{\rho}^{0}N$.

In a more general way, \eqref{eq:lightray_fourier} gives
\begin{equation} \label{eq:lightray_fourier-derivatives}
    \int_{\R}T^{k}L_{\gamma}fdT=i^{k}\sum_{j=0}^{k} \binom{k}{j} \int_{I} (it(s))^{k-j}\partial_{\tau}^{j}\mathcal{F}_{t}f(0,x(s))ds.
\end{equation}

Hence the previous argument can be generalized to obtain the following stability result for the moments of the light ray transform.

\begin{thm} \label{thm:moment-stability}
Let $(N,h,d(\rho\omega),\rho^{2}/(-2\lambda))$ be $0$-simple. Then, for every $k \geq 0$, there exist constants $A_{k}=A_{k}(\rho)>0$ such that for any $f\in L^{1} (\R_t,\langle t\rangle^{k} dt;L^2(N))$ we have
\[
\left\| \int_{\R}t^{k}f(t,\bullet)dt \right\|_{L^{2}(N)} \leq A_{k} \sum_{r=0}^{k} \left\| \int_{\R} T^{r}(L_{\gamma_{T,X,V}}f)(T,\bullet)dT \right\|_{H^{1/2}(\partial_{+}S_{\rho}^{0}N)}.
\]
\end{thm}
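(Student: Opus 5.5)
The plan is to argue by induction on $k$, using the Fourier identity \eqref{eq:lightray_fourier-derivatives} to turn each moment of the light ray data into magnetic ray transforms of time moments of $f$. The stability input will be the magnetic X-ray stability estimate from Appendix~\ref{appendix:magnetic}: for a simple magnetic system, $\|u\|_{L^2(N)}\le C\|I_\mu u\|_{H^{1/2}(\partial_+SN)}$. Through the reduction \cite{MT24}*{Proposition 3.7} this transfers to $I_{\MP}$ on $\partial_+S^0_\rho N$, with a constant depending on $\rho$ and on $\lambda$ (bounded above and below on compact $N$). The case $k=0$ is exactly the argument given above the statement. With $\tau=0$, $\int_\R L_\gamma f\,dT = I_{\MP}(\mathcal F_t f(0,\bullet))$, and $\mathcal F_tf(0,x)=\int_\R f(t,x)\,dt$. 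Hence $\|\int f\,dt\|_{L^2}\le C\|\int L_\gamma f\,dT\|_{H^{1/2}}$. Since we work directly with the moment $\int L_\gamma f\,dT$, no $L^1_T$ bound is needed.

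For general $k$, I would first note that $\partial_\tau^j\mathcal F_tf(0,x)=(-i)^j\int t^j f(t,x)\,dt=:(-i)^j m_j(x)$. Next I would recompute \eqref{eq:lightray_fourier-derivatives} directly rather than differentiating \eqref{eq:lightray_fourier}. Substituting $T=t-t(s)$ along the geodesic gives
\begin{equation*}
\int_\R T^kL_\gamma f\,dT=\int_I\int_\R (t-t(s))^k f(t,x(s))\,dt\,ds=\sum_{j=0}^k\binom{k}{j}\int_I(-t(s))^{k-j}m_j(x(s))\,ds.
\end{equation*}
The $j=k$ term is $I_{\MP}(m_k)$. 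The remaining terms are weighted $\MP$-ray transforms $J_{k-j}m_j(X,V):=\int_I(-t(s))^{k-j}m_j(x(s))\,ds$ of the lower moments. Here $t(s)=\int_0^s(\langle\omega,\dot x\rangle-\rho/\lambda)\,d\sigma$ is a smooth bounded function on the (compact, by simplicity) set of $\MP$-geodesics, as is the travel time.

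Applying the magnetic stability estimate to $m_k$ then gives
\begin{equation*}
\|m_k\|_{L^2}\le C\Bigl\|\int T^kL_\gamma f\,dT\Bigr\|_{H^{1/2}}+C\sum_{j<k}\binom{k}{j}\|J_{k-j}m_j\|_{H^{1/2}(\partial_+S^0_\rho N)}.
\end{equation*}
The proof then reduces to a boundedness statement: each weighted transform $J_\ell\colon L^2(N)\to H^{1/2}(\partial_+S^0_\rho N)$ is bounded. With this, $\|J_{k-j}m_j\|_{H^{1/2}}\lesssim\|m_j\|_{L^2}$, and the inductive hypothesis bounds $\|m_j\|_{L^2}$ by $A_j\sum_{r\le j}\|\int T^rL_\gamma f\,dT\|_{H^{1/2}}$. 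Summing yields the claim with $A_k$ defined recursively.

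I expect this mapping property to be the main obstacle. For simple magnetic (and $\MP$) systems, the weighted X-ray transform with a smooth weight on $SN$ is bounded $L^2(N)\to H^{1/2}(\partial_+SN)$. Equivalently, its normal operator is a pseudodifferential operator of order $-1$, by the same argument as in \cite{DPSU07}/Appendix~\ref{appendix:magnetic}, using an extension of $N$ to a slightly larger simple system and $L^2$-functions supported in $N$. The delicate point is that the weight $t(s)^\ell$ must be smooth as a function on $SN$, not merely along each geodesic. To arrange this, I would write the weight as $w(x(s),\dot x(s))$ with $w(x,v)=-\int_{-\tau_-(x,v)}^0(\langle\omega,\dot x\rangle-\rho/\lambda)\,d\sigma$, where $\tau_-$ is the backward exit time. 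Since $\tau_-$ is smooth on the interior for simple (strictly convex) systems, and the extension trick handles the boundary, $w$ is smooth. If boundary smoothness causes trouble, I would fall back on proving the estimates with $H^{1/2}$ of the extended boundary and restricting. The support and integrability hypotheses $f\in L^1(\langle t\rangle^kdt;L^2)$ ensure that each $m_j\in L^2(N)$ and justify Fubini; the result for $C_0^\infty$ functions then extends by density.
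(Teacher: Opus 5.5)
Your proposal is correct and follows the paper's proof essentially step for step: the binomial identity \eqref{eq:lightray_fourier-derivatives} (which your substitution $T=t-t(s)$ derives more transparently), the $\MP$ stability estimate of Theorem~\ref{thm:MP-stability}, boundedness of the weighted transforms $J_\ell\colon L^2(N)\to H^{1/2}(\partial_+S^0_\rho N)$ (the paper's Lemma~\ref{lemma:bound-aux}), and induction on $k$ with recursively defined $A_k$. The one inaccurate sentence is your claim that the weight $w$ built from the backward exit time is smooth on $SN$. It is not smooth at the glancing set $S\partial N$: for $\omega=0$ and constant $\lambda$ one has $w=c\,\tau_-$, and $\tau_-$ behaves like the square root of the distance to $\partial N$ near tangential directions. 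The paper only claims smoothness of $t_{X,V}(s)^\ell$ in the trajectory variables $(s,X,V)$ before appealing to the argument of Lemma~\ref{lemma:mapping-prop-magnetic}.
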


Here, $\langle t\rangle=(1+t^{2})^{1/2}$ is the usual Japanese bracket. 

\begin{rmk}
Theorem \ref{thm:moment-stability-intro} is a simplified version of Theorem \ref{thm:moment-stability}. More precisely, in the full statement the relevant dynamics is the 0-simple $\MP$-system $(N,h,d(\rho\omega),\rho^{2}/(-2\lambda))$, and the constants depend on the corresponding energy level. On the other hand, Theorem \ref{thm:moment-stability-intro} is stated for the magnetic reduction of, say, the $\MP$-system with $\rho=1$, so that the dependence on the constants can be avoided. In the full version, the constants depend on the used $\MP$-system only, but not on other possible systems, i.e., we only care about the $\MP$-system for one fixed $\rho$. Moreover, Theorem \ref{thm:moment-stability} is stated on the natural incoming boundary bundle $\partial_{+}S_{\rho}^{0}N$, while in the introductory formulation this dependence is suppressed in the notation and written simply as $\partial_{+}SN$. Finally, the full theorem applies to the larger class $f\in L^{1}(\R_{t},\langle t\rangle^{k} dt;L^2(N))$, which is the natural condition ensuring that the $k$-th time moment of $f$ is well-defined as an $L^2(N)$-function. In particular, the introductory statement follows from the full theorem for $f\in C^\infty_{0}(M)$.   
\end{rmk}

Before giving the proof of Theorem \ref{thm:moment-stability}, we will need a continuity result for certain auxiliary operators. Let us define
\[
M_{k}(X,V)=\int_{\R}T^{k}L_{\gamma_{T,X,V}}fdT, \quad J_{\ell}(X,V)=\int_{I}t_{X,V}(s)^{\ell}f(x_{X,V}(s))ds,
\]
\[
m_{j}(x)=\partial_\tau^{j}\mathcal{F}_{t}f(0,x)=\left.\partial_\tau^{j}\left(\int_{\R} e^{-i \tau t} f(t, x) d t\right) \right|_{\tau=0}=\int_{\R}(-it)^j f(t, x) dt.
\]

\begin{lemma}\label{lemma:bound-aux}
Suppose that the $\MP$-system $(N,h,d(\rho \omega),\rho^{2}/(-2\lambda))$ is 0-simple. There are constants $C_{\ell}>0$ such that
    \[
    \|J_{\ell} u\|_{H^{1 / 2}(\partial_{+} S_{\rho}^{0} N)} \leq C_{\ell}\|u\|_{L^2(N)}, \quad \ell \geq 1
    \]
\end{lemma}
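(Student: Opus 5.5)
The plan is to view $J_\ell$ as a weighted $\MP$-ray transform and to reduce the estimate to the standard $L^2(N)\to H^{1/2}(\partial_+ S N)$ bound for the (weighted) magnetic ray transform on a simple magnetic system. Concretely, for $u\in L^2(N)$ we have
\[
J_\ell u(X,V)=\int_I w_\ell(X,V,s)\,u(x_{X,V}(s))\,ds,\qquad w_\ell(X,V,s)=t_{X,V}(s)^\ell,
\]
where by \eqref{eq:lightlike-geo-rho}
\[
t_{X,V}(s)=\int_0^s\Big(\langle\omega_{x(\sigma)},\dot x(\sigma)\rangle-\frac{\rho}{\lambda(x(\sigma))}\Big)d\sigma
\]
is a smooth function on $\{(X,V,s): (X,V)\in\partial_+S^0_\rho N,\ s\in[0,\tau(X,V)]\}$. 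By $0$-simplicity the exit time $\tau$ is bounded and smooth on the interior of $\partial_+S^0_\rho N$. So $w_\ell$ is a smooth weight along the flow, polynomial in $t$. First I rewrite $w_\ell$ as a function on the energy level $S^0_\rho N$. I do this by flowing back from $(x(s),\dot x(s))$ to the boundary: $w_\ell(X,V,s)=W_\ell(\varphi_s(X,V))$, with $W_\ell(x,v)=t(x,v)^\ell$ and $t(x,v)=-\int_{-\tau_-(x,v)}^0(\ldots)\,d\sigma$ the "elapsed time" function. On a simple system $\tau_-$ is smooth on $SN$ away from the glancing set, and continuous and bounded up to it.

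Next I pass to the magnetic reduction using \cite{MT24}*{Proposition 3.7}, exactly as in the preceding discussion. This turns $J_\ell$ into a weighted magnetic ray transform $I_{\mu,W}u=\int W(\varphi_s)\,u(x(s))\,ds$ on the simple magnetic system $(N,H,d(\rho\omega))$, up to smooth positive factors and a smooth reparametrization of $\partial_+SN$, which preserves $H^{1/2}$ norms. Then I invoke the $L^2\to H^{1/2}$ mapping property for weighted magnetic ray transforms recalled in Appendix~\ref{appendix:magnetic}. If only the unweighted version is available there, I argue as in the Riemannian case (\cite{PSU23}, \cite{DPSU07}). Extend $(N,H)$ to a slightly larger simple magnetic manifold $N_1$, extend $u$ by zero and $W_\ell$ smoothly. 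The normal operator $I_{\mu,W}^*I_{\mu,W}$ is then a $\Psi$DO of order $-1$ on $N_1$ with a smooth weight, so $I_{\mu,W}\colon L^2(N)\to H^{1/2}$ follows by the standard $TT^*$-type argument together with interpolation/trace considerations on $\partial_+SN_1$.

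The main obstacle is the regularity of the weight $W_\ell$ near the glancing set $\partial_0SN$, where $\tau_-$ is only $C^{1/2}$-type. The fix I would try first is the extension trick. Work on $N_1\Supset N$ and define $t$ relative to entry into $N_1$ rather than $N$; then $t$ is smooth on all of $SN$, since the glancing points of $N$ are interior to $N_1$. The two choices of $t$ differ by the smooth time spent in $N_1\setminus N$. Expanding binomially, $J_\ell u$ becomes a finite sum of products $a_j(X,V)\cdot \tilde J_j u$, where the $a_j$ are restrictions of smooth functions and $\tilde J_j$ have smooth weights on $SN_1$. The $H^{1/2}$ bound is stable under multiplication by functions that are smooth up to the boundary, which concludes the proof with constants $C_\ell$ depending on $\ell$ and the system.
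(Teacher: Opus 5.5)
Your proposal is correct and follows essentially the paper's route. The paper likewise treats $J_\ell$ as a weighted ray transform with weight $w_\ell=t_{X,V}(s)^\ell$. It observes that this weight and its derivatives are bounded (via $|t_{X,V}(s)|\lesssim s\le S_{\max}$ by non-trapping), and then reruns the argument of Lemma~\ref{lemma:mapping-prop-magnetic}. Your extension to $N_1$, with the binomial splitting of $t_N^\ell$ using $t_N=t_{N_1}-(\text{time spent in } N_1\setminus N)$, makes the weight a genuinely smooth function on phase space near the glancing set; this is a worthwhile refinement of a point the paper passes over silently. Like the paper, however, you leave implicit how an $H^{1/2}(\partial_+SN_1)$ bound transfers to $H^{1/2}(\partial_+S^0_\rho N)$.
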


\begin{proof}
First, we claim that $w_{\ell}(s,X,V_{x}):=t_{X,V_{x}}(s)^{\ell}$ is smooth and uniformly bounded, together with all derivatives of finite order, on the relevant compact set of trajectories. Indeed, since $|\dot{x}|_{h}=\rho/\sqrt{\lambda}$, we have 
\[
|t_{X,V_{x}}(s)| \leq \left( \|\omega\|_{L^{\infty}(N)}|\rho|\|\lambda^{-1/2}\|_{L^{\infty}(N)}+|\rho|\|\lambda^{-1}\|_{L^{\infty}(N)} \right) s\,.
\]
Since the $\MP$-system is simple, it is non-trapping, hence $s\leq \tau(X,V_{x})\leq S_{\max}<\infty$. This gives a bound for $w_{\ell}$, while its derivatives can be bounded in a similar way. Hence, the result can be obtained in the same way as Lemma \ref{lemma:mapping-prop-magnetic}.   
\end{proof}

\begin{proof}[Proof of Theorem~\ref{thm:moment-stability}]
In the new notations, we have to prove that for each $k\geq 0$:
\[
\|m_{k}(x)\|_{L^{2}(N)} \leq A_{k} \sum_{r=0}^{k} \| M_{r} \|_{H^{1/2}(\partial_{+}S_{\rho}^{0}N)}.
\]
From \eqref{eq:lightray_fourier-derivatives}, we have
\[
I_{\MP}(m_{k})=i^{-k}M_{k}-\sum_{j=0}^{k-1} \binom{k}{j}i^{k-j}J_{k-j}(m_{j}).
\]
Hence,
\[
\|I_{\MP}m_{k}\|_{H^{1/2}(\partial_{+}S_{\rho}^{0}N)} \leq \|M_{k}\|_{H^{1/2}(\partial_{+}S_{\rho}^{0}N)}+\sum_{j=0}^{k-1}\binom{k}{j} \|J_{k-j}m_{j}\|_{H^{1/2}(\partial_{+}S_{\rho}^{0}N)}.
\]
Using the stability of the $\MP$-ray transform (Theorem \ref{thm:MP-stability}) on the left-hand side and Lemma \ref{lemma:bound-aux} on the right-hand side, we find
\begin{equation} \label{eq:ineq-induction}
    \|m_{k}\|_{L^{2}(N)} \leq C_{0}\|M_{k}\|_{H^{1/2}(\partial_{+}S_{\rho}^{0}N)} + C_{0} \sum_{j=0}^{k-1} \binom{k}{j}C_{k-j}\|m_{j}\|_{L^{2}(N)}.
\end{equation}
We now prove the theorem by induction over $k$. The case $k=0$ is given by the previous discussion, so let us assume that $k>0$ and that for every $j<k$, there exists $A_{j}>0$ such that 
\[
\|m_{j}\|_{L^{2}(N)} \leq A_{j}\sum_{r=0}^{j}\|M_{r}\|_{H^{1/2}(\partial_{+}S_{\rho}^{0}N)}.
\]
Using this together with \eqref{eq:ineq-induction}, we obtain
\[
\begin{split}
    \|m_{k}\|_{L^{2}(N)} &\leq C_{0} \|M_{k}\|_{H^{1/2}(\partial_{+}S_{\rho}^{0}N)}+C_{0}\sum_{j=0}^{k-1}\binom{k}{j}C_{k-j}A_{j} \sum_{r=0}^{j}\|M_{r}\|_{H^{1/2}(\partial_{+}S_{\rho}^{0}N)} \\
    & \leq C_{0}\left( 1+\sum_{j=0}^{k-1}\binom{k}{j}C_{k-j}A_{j} \right) \sum_{r=0}^{k}\|M_{r}\|_{H^{1/2}(\partial_{+}S_{\rho}^{0}N)}, 
\end{split}
\]
completing the proof.    
\end{proof}

\appendix

\section{Microlocal analysis for Gevrey regularity} \label{appendix:gevrey}

Let $\sigma \geq 1$. For $\sigma>1$, the validity of the implicit/inverse function theorems in the $G^\sigma$ setting is given in \cite{ko} and that locally finite $G^\sigma$ partitions of unity subordinate to open covers exist follows from \cite{HormanderI}*{Theorem~1.4.10}.

We now state
\begin{thm}\label{thm:doublefibFIO}
Let $\sigma\geq 1$ and $U , V$ be oriented $G^\sigma$ manifolds without boundary of dimensions $n_U, n_V$ respectively and $Z \subset U\times V$ an oriented embedded $G^\sigma$-submanifold of dimension $n_U+n'$ with $n_V = n'+n'', n', n'' >0$ so that $\pi_U \colon Z \to U$ is a submersion. Fix nowhere-vanishing $G^\sigma$ orientation forms $\omega_Z$ on $Z$ and $\omega_U$ on $U$, and let $d\omega_{U_y}$ be the $G^\sigma$ orientation form on $U_y = \pi_V(\pi_U^{-1}(y))$ induced by the submersion $\pi_U$ (which exists due to \cite{MST23}*{Lemma~2.1}) with $\pi_V\colon Z \to V$ the projection. Assume also that for every compact $K\subset V$ the map $\pi_U \colon Z\cap \pi_V^{-1}(K)\to U$ is proper. Let $\kappa \in G^\sigma(U\times V)$, and define the operator
\[
    Tu(y) = \int_{U_y} \kappa(y,x) u(x) d \omega_{U_y}(x)\,,\quad y\in U\,, u\in C_c^\infty(V)\,.
\]
The operator $T$ is a Fourier integral operator of order $n_V/4 - n'/2-n_U/4$ with canonical relation $(N^* Z\setminus 0)'$ whose kernel can locally be written as 
\[
    T(y,x) = \int_{\R^{n''}
} e^{i(\phi(y,x')-x'')\cdot \eta} t(y,x) d \eta\,,
\]
where $\phi,t\in G^\sigma$. Here $Z$ is locally written as $\{x'' = \phi(y,x')\}$ as in \cite{MST23}*{Lemma~2.3}.
\end{thm}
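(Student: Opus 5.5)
The plan is to follow the classical construction of Guillemin/Hörmander for double fibration transforms, as in \cite{MST23}, and check that every step preserves the $G^\sigma$ class.

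\textbf{Step 1: local graph form of $Z$.} Near a point of $Z$, use the submersion $\pi_U\colon Z\to U$ and the $G^\sigma$ implicit function theorem \cite{ko} to choose $G^\sigma$ coordinates $x=(x',x'')\in\R^{n'}\times\R^{n''}$ on $V$ in which
\[
Z=\{(y,x)\colon x''=\phi(y,x')\},\qquad \phi\in G^\sigma,
\]
as in \cite{MST23}*{Lemma~2.3}. The fiber $U_y$ is then parametrized by $x'\mapsto (x',\phi(y,x'))$. The induced form satisfies $d\omega_{U_y}=a(y,x')\,dx'$, where $a$ is a nowhere-vanishing $G^\sigma$ function obtained from $\omega_Z$ and $\omega_U$ by interior multiplication and division; this is the content of \cite{MST23}*{Lemma~2.1}, whose proof is algebraic in the forms and so stays in $G^\sigma$.

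\textbf{Step 2: oscillatory representation.} Localize $u$ with a $G^\sigma$ partition of unity \cite{HormanderI}*{Theorem~1.4.10}. The properness assumption guarantees that for $u$ supported in a compact $K$ only finitely many charts contribute over each compact set in $U$. Writing the delta function as a Fourier integral,
\[
Tu(y)=\int \kappa(y,x)a(y,x')\,\delta\bigl(x''-\phi(y,x')\bigr)u(x)\,dx
=(2\pi)^{-n''}\iint e^{i(\phi(y,x')-x'')\cdot\eta}\,\kappa(y,x)a(y,x')\,u(x)\,d\eta\,dx .
\]
So the kernel has the claimed form with $t=(2\pi)^{-n''}\kappa a\in G^\sigma$, and $t$ is independent of $\eta$. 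It is therefore a $G^\sigma$ symbol of order $0$.

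\textbf{Step 3: nondegeneracy and canonical relation.} The phase $\Phi=(\phi(y,x')-x'')\cdot\eta$ is nondegenerate because $d_{x'',\eta}\partial_\eta\Phi$ contains $-\mathrm{Id}$. Its critical set $\{\partial_\eta\Phi=0\}$ is exactly $Z$. The map $(y,x,\eta)\mapsto(y,d_y\Phi;x,-d_x\Phi)$ parametrizes $(N^*Z\setminus0)'$, since the differential of $x''-\phi(y,x')$ spans the conormal directions.

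\textbf{Step 4: order.} By Hörmander's convention, the order is $m+N/2-(n_U+n_V)/4$ with $m=0$ and $N=n''$ phase variables. This gives
\[
\frac{n''}{2}-\frac{n_U+n_V}{4}=\frac{n_V-n'}{2}-\frac{n_U+n_V}{4}=\frac{n_V}{4}-\frac{n'}{2}-\frac{n_U}{4},
\]
as claimed.

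\textbf{Main obstacle.} I expect the main difficulty to be Step 1: verifying that the chart change and the induced orientation form are genuinely $G^\sigma$. This needs the Gevrey implicit function theorem together with closure of $G^\sigma$ under composition and under division by nonvanishing functions, with $\sigma>1$ handled by \cite{ko} and $\sigma=1$ by the analytic case.
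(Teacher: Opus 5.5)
Your proposal is correct and takes essentially the same route as the paper, which runs the proof of \cite{MST23}*{Theorem~2.2} with ``smooth'' replaced by $G^\sigma$, using the same ingredients you list: the $G^\sigma$ implicit/inverse function theorem for the graph form $x''=\phi(y,x')$ of $Z$, the fact that the fiber form is locally a $G^\sigma$ multiple of Lebesgue measure, and $G^\sigma$ partitions of unity to reduce to local arguments. One caveat: $G^\sigma$ partitions of unity exist only for $\sigma>1$ (there are no nonzero compactly supported analytic cutoffs), so the case $\sigma=1$ must come from the analytic statement in \cite{MST23}*{\S~2}, as the paper does---although the local kernel formula itself needs no cutoff at all.
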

Observe that the properness assumption on $\pi_U$ is holds if either $Z$ is closed or $\pi_V$ is a proper submersion. 

For the analytic case $\sigma=1$, we refer to the statement at the beginning of \cite{MST23}*{\S~2}. For $\sigma>1$, the proof \cite{MST23}*{Theorem~2.2} applies mutatis mutandis replacing ``smooth'' with $G^\sigma$, where we use the fact that volume forms on $G^\sigma$-manifolds are locally $G^\sigma$ functions multiplied by the Lebesgue measure, and \cite{MST23}*{Lemma~2.3} is valid in the $G^\sigma$ setting by application of the $G^\sigma$ inverse function theorem, and we may reduce to local arguments by using $G^\sigma$ partitions of unity. 

\section{Stability of the magnetic and \texorpdfstring{$\MP$}{MP}-ray transforms} \label{appendix:magnetic}

\subsection{Magnetic systems} \label{subsection:magnetic}

In this section we recall the theory of magnetic systems and prove a stability estimate for the magnetic ray transform. For further details, we refer to \cite{DPSU07}.

A \emph{magnetic system} consists of a smooth compact Riemannian manifold with smooth boundary $(N,h)$ together with a closed 2-form $\Omega$, which is called the \emph{magnetic field}. Given a magnetic system $(N,h,\Omega)$, the magnetic field $\Omega$ induces the \emph{Lorentz force} $Y \colon TN \to TN$ given by
\[ 
\Omega_{p}(u,v)=(Y_{p}u,v)_{h}, \quad p \in N, \, u,v \in T_{p}N.
\]
$C^{2}$ curves $\sigma \colon [a,b] \to N$ that satisfy
\begin{equation} \label{eq:mag-geo}
    \nabla_{\dot{\sigma}} \dot{\sigma}=Y(\dot{\sigma})
\end{equation}
are called \emph{magnetic-geodesics}. Here, $\nabla$ is the Levi-Civita connection associated to the metric $h$. Equation \eqref{eq:mag-geo} defines a flow, called the \emph{magnetic flow}, and is given by
\[ 
\varphi_{t}(p,v)=(\sigma(t),\dot{\sigma}(t)),  
\]
where $\sigma$ solves \eqref{eq:mag-geo} and $\sigma(0)=p$, $\dot{\sigma}(0)=v$. This is a flow in the unit sphere bundle $SN$ of $N$ (i.e., $|\dot{\sigma}|_{h}=1$). Let $\tilde{\nu}(p)$ be the inward unit vector normal to $\partial N$ at $p$, and denote by $\Pi$ the second fundamental form of $h$. We say that $\partial N$ is \emph{strictly magnetic-convex} if
\[
\Pi(p, v)>(Y_p(v), \tilde{\nu}(p))_{h}, \quad \forall (p, v) \in S(\partial N).
\]
For $p \in N$, the \emph{magnetic-exponential map} at $p$ is the map 
\[
\exp _p^{\mu} \colon T_p N \to N\,, \quad \exp _p^{\mu}(t v)=\pi \circ \varphi_t(v),
\]
where $t \geq 0$, $v \in S_p N$, and $\pi \colon T N \to N$ is the base point projection.

\begin{defin}
We say that $N$ is \emph{magnetically simple} with respect to $(h, \Omega)$ if $\partial N$ is strictly magnetic-convex and that $(\exp_{p}^{\mu})^{-1}$ is a diffeomorphism for every $p \in N$.   
\end{defin}

In this case, $\Omega$ is exact, that is, there exists a 1-form $\alpha$ such that $\Omega=d\alpha$, and we call $\alpha$ the magnetic potential. Henceforth we call $(h, \alpha)$ a simple magnetic system on $N$. We will also say that $(N, h, \alpha)$ is a simple magnetic system.

We recall that $\partial_{+}SN=\{(p,v) \in SN: p\in \partial N, (v,\tilde{\nu}(p))_{h}\geq 0\}$ is the \emph{influx boundary}, where $\nu_{h}$ is the inward unit normal. For $(p, v) \in \partial_{+} SN$, let $\tau(p, v)$ be the exit time function for the magnetic-geodesic $\sigma$ with $\sigma(0)=p$, $\dot{\sigma}(0)=v$. By \cite{DPSU07}, we have that for a simple magnetic system, the function $\tau \colon \partial_{+} SN \to \R$ is smooth.

Given $f \in C(SN)$, we define its \emph{magnetic X-ray transform} over the magnetic geodesic $\sigma \colon [0,\tau(p,v)] \to N$ by
\[
I_{\mu}f(\sigma)=\int_{\sigma}f=\int_{0}^{\tau(p,v)} f(\sigma(t),\dot{\sigma}(t))dt,
\]
where $(\sigma(0),\dot{\sigma}(0))=(p,v)$. For simple magnetic systems, each magnetic geodesic can be identified with a unique $(p,v) \in \partial_{+}SN$. Hence, we will write $I_{\mu}f(p,v)$ instead of $I_{\mu}f(\sigma)$. 

We will prove

\begin{thm} \label{thm:stability-magnetic-xray}
Let $(N, h,\alpha)$ be a simple magnetic system. If $I_{\mu}$ is injective, there exists a constant $C>0$ such that for all $f \in L^2(N)$,
\[
\|f\|_{L^2(N)}\leq C\|I_{\mu} f\|_{H^{1 / 2}(\partial_{+}SN_{1})}.
\]
\end{thm}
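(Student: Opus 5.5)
We follow the standard route for the Riemannian geodesic X-ray transform (Stefanov--Uhlmann, and \cite{DPSU07} in the magnetic case): embed into a slightly larger simple system, study the normal operator as an elliptic pseudodifferential operator of order $-1$, and combine ellipticity with injectivity. Since simplicity is an open condition, we first extend $(N,h,\alpha)$ to a simple magnetic system $(N_1,h,\alpha)$ with $N\Subset N_1^{\mathrm{int}}$. We regard $f\in L^2(N)$ as a function on $N_1$ extended by zero, and write $I_\mu$ for the magnetic ray transform on $N_1$. Its data live on $\partial_+SN_1$, which is why $N_1$ appears in the norm in the statement.

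\textbf{Normal operator.} Set $\mathcal{N}=I_\mu^*I_\mu$, with the adjoint taken for the measure $\langle v,\tilde\nu\rangle\,d\Sigma$ on $\partial_+SN_1$. By \cite{DPSU07}, in a neighborhood of $N$ the operator $\mathcal{N}$ is a classical elliptic $\Psi$DO of order $-1$. Concretely, its kernel is
\[
\frac{a(x,y)}{|x-y|^{n-1}}\,J(x,y)^{-1},
\]
where $J$ is the Jacobian of the magnetic exponential map, and $a(x,y)$ is the weight $1+1$ coming from the two orientations, once the non-reversibility of magnetic flows is taken into account. The principal symbol is $c_n|\xi|^{-1}$, with a positive weight on the diagonal that is symmetric in $\xi$. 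Simplicity, meaning no magnetic conjugate points, is exactly what guarantees that $J$ is smooth and nonvanishing.

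\textbf{Parametrix and compactness.} Ellipticity gives a parametrix $P$ of order $1$ with $P\mathcal{N}=\chi+K$ on functions supported in $N$, where $\chi$ is a cutoff equal to $1$ near $N$ and $K$ is smoothing. Hence
\[
\|f\|_{L^2}\le C\|\mathcal{N}f\|_{H^1(N_1)}+C\|Kf\|_{L^2}.
\]
If $I_\mu$ is injective on $L^2(N)$, then $\mathcal{N}$ is injective too, since $(\mathcal{N}f,f)=\|I_\mu f\|^2$. The compact error term can then be removed by the usual contradiction/compactness argument (Taylor's lemma on a priori estimates with compact perturbations), which yields
\[
\|f\|_{L^2(N)}\le C\|\mathcal{N}f\|_{H^1(N_1)}.
\]

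\textbf{Adjoint bound.} It remains to show that
\[
I_\mu^*\colon H^{1/2}(\partial_+SN_1)\to H^1(N_1)
\]
is bounded, or at least that $\|\chi I_\mu^*\phi\|_{H^1}\lesssim\|\phi\|_{H^{1/2}}$. We write $I_\mu^*\phi(x)=\int_{S_xN_1}\phi\bigl(\psi(x,v)\bigr)\,dv$, where $\psi$ maps $(x,v)$ to the incoming boundary point of the magnetic geodesic through $(x,v)$. Because $f$ is supported in the interior of $N$, the maps $\psi$ and the exit time are smooth there, with no boundary singularities. This is an FIO of order $-1/2$ whose canonical relation is locally the graph of an injective immersion, so it gains half a derivative. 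Equivalently, we verify $H^{1/2}\to H^1$ by interpolating between $L^2\to H^{1/2}$ and $H^1\to H^{3/2}$, or by duality from $I_\mu\colon H^{-1}_N\to H^{-1/2}$; the latter follows from $I_\mu\colon L^2_N\to H^{1/2}$, which is the standard gain computed through $\mathcal{N}$ itself. Combining the two estimates gives the theorem.

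\textbf{Main obstacle.} The hardest step is the half-derivative mapping property of $I_\mu^*$ in the magnetic setting. The intended route is the following. The bound $\|I_\mu f\|^2_{H^{1/2}}\sim(\Lambda\mathcal{N}f,f)$-type estimates is awkward directly, so instead we identify $\partial_+SN_1$ locally with the space of magnetic geodesics and use \cite{DPSU07}'s smooth dependence of $\psi$ and the exit time on interior points. That makes $I_\mu$ restricted to $L^2_N$ a double fibration transform in the sense of Theorem~\ref{thm:doublefibFIO} satisfying the Bolker condition, and hence an FIO of order $-1/2$.
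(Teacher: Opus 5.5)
Your proposal follows the paper's argument. The paper gets $\|f\|_{L^2(N)}\le C\|I_\mu^*I_\mu f\|_{H^1(N_1)}$ from ellipticity of $I_\mu^*I_\mu\in\Psi^{-1}$ plus injectivity, then concludes with $I_\mu^*\colon H^{1/2}(\partial_+SN_1)\to H^1_{\mathrm{loc}}(N_1)$; the only difference is that you justify this last bound by the Bolker/double-fibration FIO calculus instead of the paper's derivative-counting Lemma~\ref{lemma:mapping-prop-magnetic}. One correction: your duality step needs $I_\mu\colon H^{-1}_N\to H^{-1/2}$, which does not follow from the $L^2_N\to H^{1/2}$ gain alone, but the same calculus supplies it (e.g.\ $I_\mu^*\Lambda^{-1}I_\mu\in\Psi^{-2}$ for an elliptic first-order $\Lambda$ on the space of magnetic geodesics).
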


This can be thought of as the magnetic analog of the stability estimate for the geodesic X-ray transform appearing in \cite{AS20}, \cite{SUbook}*{\S~VI.4}. We will need the following preliminary result, which, in the case of the geodesic ray transform, also appears in \cite{AS20} as a consequence of \cite{SU12}*{Proposition 5.12}.

\begin{lemma} \label{lemma:mapping-prop-magnetic}
Assume that $(N_{1},h_{1},\alpha_{1})$ is an extension of $(N,h,\alpha)$ so that it is non-trapping and strictly convex. Then, for every $s \geq 0$
\[  
I_{\mu} \colon H_{N}^{s-1/2} \to H^{s}(\partial_{+}SN_{1}), \quad I_{\mu}^{*}\colon H^{s-1/2}(\partial_{+}SN_{1}) \to H_{\mathrm{loc}}^{s}(N_{1}),
\]
are continuous. Here $(N_{1}, g_{1},\alpha_{1})$ is a simple system of the same dimension so that $M \Subset M_{1}$, and $g=g_{1}|_M$, $\alpha=\alpha_{1}|_{M}$. We keep the notation $g$ for $g_1$, and $\alpha$ for $\alpha_{1}$. We extend the function from $N$ to $N_{1}$ by zero.
\end{lemma}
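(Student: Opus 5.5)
The plan is to follow the argument of \cite{SU12}*{Proposition 5.12} and \cite{AS20} for the geodesic case, replacing the geodesic flow by the magnetic flow. The key structural point is that, on the non-trapping, strictly (magnetic-)convex extension $(N_1,h_1,\alpha_1)$, the magnetic X-ray transform of functions supported in $N\Subset N_1^{\mathrm{int}}$ can be written as a Fourier integral operator. We view $I_\mu$ as a double fibration transform in the sense of Theorem~\ref{thm:doublefibFIO}, with $U=\partial_+SN_1$ (or its interior), $V=N_1^{\mathrm{int}}$, and $Z=\{(p,v,x): x=\sigma_{p,v}(t),\ 0<t<\tau(p,v)\}$. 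Two facts are needed here.
\begin{enumerate}
\item Since $\tau$ is smooth (by \cite{DPSU07}, using strict convexity and non-trapping), $Z$ is an embedded submanifold and $\pi_U$ is a submersion.
\item The support of $f$ lies in $N$, which stays at positive distance from $\partial N_1$. Hence only the part of $Z$ over a compact set of $V$ matters, and the properness hypothesis holds there. In particular, the region near tangential directions at $\partial N_1$, where $\tau$ fails to be smooth, is never touched.
\end{enumerate}
Consequently $I_\mu\colon H^{s-1/2}_N\to H^s_{\mathrm{loc}}$ will follow once we know the canonical relation is locally a canonical graph, or more generally satisfies the Bolker condition.

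For the mapping property, the cleanest route, as in \cite{SU12}, is through the normal operator $I_\mu^*I_\mu$. By \cite{DPSU07}, for a simple (hence locally, for the non-trapping extension near $N$) magnetic system, $I_\mu^*I_\mu$ restricted to functions supported in $N$ is a pseudodifferential operator of order $-1$. This is because the magnetic exponential map is a local diffeomorphism, so there are no magnetic conjugate points, and the kernel has the usual $|x-y|^{1-n}$ singularity. One then writes $\|I_\mu f\|_{H^s}^2$ as a pairing against $I_\mu^*\Lambda^{2s}I_\mu$, with $\Lambda$ an elliptic operator on $\partial_+SN_1$. The Bolker/clean intersection property shows that this composite is a $\Psi$DO of order $2s-1$, by the clean composition calculus of FIOs (as in \cite{SU12}). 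This gives $\|I_\mu f\|_{H^s}\lesssim\|f\|_{H^{s-1/2}}$. Compact support of $f$ in $N$ and of the relevant part of $\partial_+SN_1$ lets us localize with cutoffs. The statement for $I_\mu^*$ then follows by duality: $I_\mu^*$ is the transpose FIO, and $I_\mu^*\colon H^{s-1/2}\to H^{s}_{\mathrm{loc}}$ is obtained either by the same composition argument applied to $I_\mu I_\mu^*$, or directly from the FIO order $-1/2$ and the local canonical graph property.

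The main obstacle is the Bolker condition, namely injectivity of the projection $N^*Z\to T^*V$ and local injectivity of $N^*Z\to T^*U$. Injectivity amounts to the statement that distinct magnetic geodesics through $x$ conormal to $\xi$ cannot carry the same covector in $T^*U$. This follows from the absence of conjugate points along magnetic geodesics through $N$, via magnetic Jacobi fields, exactly as in \cite{DPSU07}. I would first attempt to verify this by computing $d\pi_U$ on $N^*Z$ in terms of magnetic Jacobi fields, mirroring the geodesic case; the only change is the extra term $Y$ in the Jacobi equation. A secondary technical point is the regularity of $\tau$ up to the relevant boundary set, which is handled by strict magnetic convexity.
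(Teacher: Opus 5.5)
Your strategy works, but it takes a genuinely different route from the paper's. The paper uses no FIO composition. In local coordinates $y$ on $\partial_+SN_1$ it differentiates $I_\mu f$ directly and, via magnetic Jacobi fields, rewrites $\partial_y^\alpha I_\mu f$ as a finite sum of weighted transforms $I_{\mu,w_\beta}(P_\beta f)$, with $P_\beta$ differential operators of order at most $|\alpha|$. This moves the derivatives from the data side to the function side. The only operator that then has to be recognized as a $\Psi$DO is a weighted normal operator, $I_\mu^*I_{\mu,w}\in\Psi^{-1}$, which is quoted from \cite{DPSU07}. This gives $I_\mu^*\partial_y^\alpha I_\mu\in\Psi^{|\alpha|-1}$, hence the bound for $s=k\in\N$, and interpolation handles the remaining $s$.

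You instead keep $\Lambda^{2s}$ on the data side. You use the double fibration structure, the Bolker condition and Guillemin's clean composition to get $I_\mu^*\Lambda^{2s}I_\mu\in\Psi^{2s-1}$. The geometric input is the same, namely no magnetic conjugate points, but you must verify Bolker yourself rather than cite the normal operator result. In return you get every real $s$ at once, including negative indices. That is exactly what the duality step for $I_\mu^*$ needs: the forward bound $H^{-s}_N\to H^{1/2-s}$, whose index is negative when $s>1/2$.

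Several statements in your sketch need correcting before the argument is right.

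\textbf{The Bolker condition is misstated.} It says that $\pi_U\colon N^*Z\setminus 0\to T^*U$, with $U=\partial_+SN_1$, is an injective immersion. Injectivity of $N^*Z\to T^*V$, which you list, is false. Every unit $v\in S_xN_1$ with $\langle\xi,v\rangle=0$ (an $(n-2)$-sphere of directions) gives a distinct magnetic geodesic and hence a distinct preimage of $(x,\xi)$. What must be excluded is two distinct conormal points $(x_1,\xi_1)\neq(x_2,\xi_2)$ on the \emph{same} geodesic producing the same covector in $T^*_{(p,v)}U$. To rule this out, pair with the magnetic Jacobi fields vanishing at $x_1$. Since $\langle\xi_2,\dot\sigma\rangle=0$ and $d\exp^\mu_{x_1}$ is invertible (no conjugate points), this forces $\xi_2=0$. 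The same Jacobi field computation gives the immersion property.

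\textbf{$C$ is not a local canonical graph, and the order is off.} A local canonical graph is impossible since $\dim U=2n-2\neq n$, unless $n=2$. The FIO order from Theorem~\ref{thm:doublefibFIO} is $-n/4$, not $-1/2$. The half-derivative gain comes from the excess $e=n-2$ of the clean composition: $2(-n/4)+2s+(n-2)/2=2s-1$.

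\textbf{$I_\mu I_\mu^*$ is not a $\Psi$DO on $U$.} Its canonical relation leaves the diagonal, for the same reason that $N^*Z\to T^*V$ is not injective. So the statement for $I_\mu^*$ must come purely from duality with your forward estimate, not from composing $I_\mu I_\mu^*$ or from a canonical graph argument.
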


\begin{proof}
The proof is an adaptation of \cite{SUbook}*{Theorem~VI.4.8.}. The argument is local on the set of magnetic geodesics. Fix a magnetic geodesic
$\gamma_{0}$ starting at $(x_{0},v_{0})$, and let $\mathcal{H} \subset \partial_{+}SN_{1}$ be a small coordinate neighborhood of $(x_{0},v_{0})$. We localize the transform to $\mathcal{H}$ by using a partition of unity, and write $y=(y^{1},\dots,y^{2n-2})$ for local coordinates on $\mathcal{H}$. It is enough to prove the estimate for $s=k\in \N$, since the general case follows by interpolation. Let $f\in C_{0}^\infty(N)$. For $|\alpha|\leq k$, the derivative \(\partial_y^\alpha I_\mu f\) is a finite sum of magnetic ray transforms whose weights are obtained by differentiating the magnetic flow with respect to the initial data. Equivalently, these derivatives are expressed in terms of magnetic Jacobi fields along the corresponding magnetic geodesics. Thus each term has the form $I_{\mu,w_{\beta}}(P_{\beta} f)$ where $w_{\beta}$ is a smooth compactly supported weight and $P_{\beta}$ is a differential operator of order $|\beta|\leq |\alpha|$, obtained from differentiation in the base variable along the magnetic Jacobi fields. Then, $I_{\mu}^*\partial_{y}^{\alpha} I_{\mu}$ is a finite sum of operators of the form $P_{\beta}^{*} I_{\mu,w_{\beta}}^{*} I_{\mu,\tilde{w}_{\beta}}$. Since the magnetic system is non-trapping and has strictly magnetic-convex boundary, we have $I_{\mu,w_{\beta}}^{*} I_{\mu,\tilde{w}_{\beta}} \in \Psi^{-1}$, see \cite{DPSU07}. Thus $I_{\mu}^{*}\partial_{y}^{\alpha} I_{\mu}\in \Psi^{|\alpha|-1}(N_{1})$. Now take $\chi \in C_{0}^{\infty}(N_{1})$ with $\chi \equiv 1$ in a neighborhood of $N$. We have
\[
\begin{split}
\|I_{\mu}f\|_{H^{k}}^{2}&=\sum_{|\alpha|\leq k}\|\partial_{y}^{\alpha}I_{\mu}f\|_{L^{2}}^{2} \\
&\leq \sum_{|\alpha|\leq 2k}|\langle I_{\mu}^{*}\partial_{y}^{\alpha}I_{\mu}f,f \rangle_{L^{2}}| \\
&=\sum_{|\alpha|\leq 2k}|\langle (\mathrm{Id}-\Delta)^{-(|\alpha|-1)/2}\chi I_{\mu}^{*}\partial_{y}^{\alpha}I_{\mu}f,(\mathrm{Id}-\Delta)^{(|\alpha|-1)/2}f\rangle_{L^{2}}|  \\
&\leq C\sum_{|\alpha|\leq 2k}\|f\|_{H^{(|\alpha|-1)/2}}^{2}.
\end{split}
\]
Summing over a partition of unity on \(\partial_+SN_1\), we obtain the result for $s=k$. The second property follows from a similar argument and duality.
\end{proof}

\begin{proof}[Proof of Theorem \ref{thm:stability-magnetic-xray}]
Recall from \cite{DPSU07} that $I_{\mu}^{*}I_{\mu}$ is an elliptic $\Psi$DO of order $-1$, which is elliptic since the magnetic system is simple. Furthermore, $I_{\mu}$ is injective in $L^{2}$. Thus, there exists a constant $C>0$ such that for any $f \in L^{2}(N)$
\[ \|f\|_{L^{2}(N)} \leq C\|I_{\mu}^{*}I_{\mu}f\|_{H^{1}(N_{1})}.\]
The result now follows from the mapping properties in Lemma \ref{lemma:mapping-prop-magnetic}.     
\end{proof}

\subsection{\texorpdfstring{$\MP$}{MP}-systems} \label{subsection:mp}

In this last subsection, we recall basic facts about $\MP$-systems and we prove the stability of the $\MP$-ray transform used in \S\ref{section:stability}. We begin by giving several definitions, and refer to \cites{AZ15, MT23, MT24} for further details.

An \emph{$\MP$-system} (magnetic-potential) consists of a smooth compact magnetic system $(N,h,\Omega)$, endowed with a smooth function $U$ called the \emph{potential}. $C^{2}$ curves $x \colon [a,b] \to N$ that satisfy
\begin{equation} \label{eq:mp-geo}
    \nabla_{\dot{x}} \dot{x}=Y(\dot{x})-\nabla U(x),
\end{equation}
are called \emph{$\MP$-geodesics}. Equation \eqref{eq:mp-geo} defines a flow, called the \emph{$\MP$-flow}, and is given by
\[ \phi_{t}(x,v)=(x(t),\dot{x}(t)),  \]
where $x$ solves \eqref{eq:mp-geo} and $x(0)=p$, $\dot{x}(0)=v$. For the $\MP$-flow, the energy $E \colon TN \to \R$ given by $E(x,v)=\frac{1}{2}|v|_{h}^{2}+U(x)$ is an integral of motion. Curves satisfying \eqref{eq:mp-geo}  with $E(x,\dot{x}) \equiv k$ are called $\MP$-geodesics of energy $k$. Given $k \in \R$, we define $S^{k}N:=\{E=k\}$. We will always assume that $k>\max_{N} U$. Let $\tilde{\nu}(p)$ be the inward unit vector normal to $\partial N$ at $p$, and set
\[
\partial_{\pm} S^{k}N:=\{(p, v) \in S^{k}N: p \in \partial N, \pm (v, \tilde{\nu}(p))_{h(p)} \geq 0\}.
\]
We denote by $\Pi$ the second fundamental form of $h$. We say that $\partial N$ is \emph{strictly $\MP$-convex} if
\[
\Pi(p, v)>(Y_p(v), \tilde{\nu}(p))_{h}-d_x U(\nu(p)) \quad \forall (p, v) \in S^k(\partial N).
\]
For $p \in N$, the $\MP$-exponential map at $p$ of energy $k$ is the map given by
\[
\exp _p^{k} \colon T_p N \to N \quad \exp _p^{k}(t v)=\pi \circ \phi_t(v),
\]
where $t \geq 0, v \in S_p^k N$, and $\pi \colon T N \to N$ is the base point projection.

\begin{defin} 
We say that $N$ is $k$-($\MP$-)\emph{simple} with respect to $(h, \Omega, U)$ if $\partial N$ is strictly $\MP$-convex and the $\MP$-exponential map $\exp_{p}^{k} \colon (\exp_p^{k})^{-1}(N) \to N$ is a diffeomorphism for every $p \in N$.   
\end{defin}

Note that this definition depends on the energy level.

As for magnetic case, when the $\MP$-system is simple, $\Omega$ is exact, that is, there exists a 1-form $\alpha$ such that $\Omega=d\alpha$, and we call $\alpha$ to be the magnetic potential. Henceforth we call $(h, \alpha, U)$ a simple $\MP$-system on $N$. We will also say that $(N, h, \alpha, U)$ is a simple $\MP$-system.

For $(p, v) \in \partial_{+} S^{k}N$, let $\tau(p, v)$ be the exit time function for the $\MP$-geodesic $\sigma$ with $\sigma(0)=p$, $\dot{\sigma}(0)=v$. By \cite{AZ15}*{Lemma~A.3} we have that for a simple $\MP$-system, the function $\tau \colon \partial_{+} S^{k}N \to \R$ is smooth.

Let $(N,h,\alpha,U)$ be a simple $\MP$-system. For $f \in C(S^{k}N)$, we define the \emph{$\MP$-ray transform} of $f$ by
\[ 
I_{\MP}f(x)=\int_{x}f:=\int_{0}^{\tau(p,v)}f(x(t),\dot{x}(t))dt,
\]
where $x \colon [0,\tau(p,v)] \to N$ is any geodesic of energy $k$ with $(x(0),\dot{x}(0))=(p,v)$. As in the case of magnetic systems, for simple $\MP$-systems, each $\MP$-geodesic can be identified with a unique $(p,v) \in \partial_{+}S^{k}N$. Hence, we will write $I_{\MP}f(p,v)$ instead of $I_{\MP}f(x)$.

\begin{defin}
Given a simple $\MP$-system $(N,g,\alpha,U)$ of energy $k$, we associate to it the magnetic system $(N,2(k-U)g,\alpha)$ (of energy $1/2$), which we call \emph{reduced magnetic system}.    
\end{defin}

By \cite{AZ15} we know that a $\MP$-system is simple if and only if its magnetic reduction is simple.

Finally, we will prove a stability estimate for $I_{\MP}$:

\begin{thm} \label{thm:MP-stability}
Let $(N,h,d(\rho\omega),-\rho^2/(2\lambda))$ be a simple $\MP$-system at energy $k$. There exists a constant $C_{0}$ depending on $k$ and $\rho$ so that
    \[
    \|u\|_{L^2(N)} \leq C_{0} \|I_{\MP} u\|_{H^{1 / 2}(\partial_{+} S^{k} N)}.
    \]
\end{thm}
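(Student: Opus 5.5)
The plan is to reduce the estimate to Theorem~\ref{thm:stability-magnetic-xray} for the reduced magnetic system. Write $U=-\rho^2/(2\lambda)$ and $c(x)=2(k-U(x))=2k+\rho^2/\lambda(x)$. Since $k>\max U$ and $N$ is compact, $c$ is positive and bounded above and below. By the definition above, the reduced magnetic system is $(N,H,d(\rho\omega))$ with $H=c\,h$, and by \cite{AZ15} it is simple because the $\MP$-system is $k$-simple.

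\emph{Step 1: relate the two transforms.} By \cite{MT24}*{Proposition 3.7}, $\MP$-geodesics of energy $k$ are reparametrizations of $H$-unit-speed magnetic geodesics, with $ds_H=c\,dt$. This gives
\[
I_{\MP}u(X,V)=I_\mu(u/c)(X,\Psi(X,V)),
\]
where $\Psi\colon\partial_+S^kN\to\partial_+S_HN$ is the fibrewise rescaling $V\mapsto V/|V|_H$. This is the same identity the paper already used in \S\ref{section:stability}. The map $\Psi$ is a diffeomorphism, smooth up to the boundary $\{(v,\tilde\nu)_h=0\}$, since rescaling preserves the sign of $(v,\tilde\nu)_h$. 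Pullback by a diffeomorphism of compact manifolds with boundary is bounded on $H^{1/2}$ with bounded inverse, so
\[
\|I_\mu(u/c)\|_{H^{1/2}(\partial_+S_HN)}\le C\|I_{\MP}u\|_{H^{1/2}(\partial_+S^kN)}.
\]

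\emph{Step 2: injectivity and stability.} We want to apply Theorem~\ref{thm:stability-magnetic-xray} to $f=u/c$. That theorem assumes $I_\mu$ is injective on functions. For simple magnetic systems this holds by \cite{DPSU07}, which proves injectivity of the magnetic ray transform on functions (and on $1$-forms up to potential terms) for simple systems. Their result is for smooth functions. To pass to $L^2$, use that $I_\mu^*I_\mu$ is an elliptic $\Psi$DO of order $-1$ near $N$. Then any $f\in L^2$ in the kernel satisfies $I_\mu^* I_\mu f=0$, so $f$ is smooth by elliptic regularity and hence zero. Theorem~\ref{thm:stability-magnetic-xray} then gives
\[
\|u\|_{L^2(N)}\le \|c\|_{L^\infty}\,\|u/c\|_{L^2(N)}\le C\|c\|_{L^\infty}\,\|I_\mu(u/c)\|_{H^{1/2}},
\]
and combining with Step~1 yields the theorem. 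The constant $C_0$ depends on $k$ and $\rho$ through $c$, $\Psi$ and the reduced metric $H$.

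\emph{Main obstacle.} The delicate point is the $H^{1/2}$ norm on the influx boundary, which has corners at the glancing set. One must make sure the norms on both sides of Step~1 are defined consistently. Following Theorem~\ref{thm:stability-magnetic-xray}, I would define them through a simple extension $N_1$, written $\partial_+SN_1$, and construct a compatible extension of the $\MP$-system (extending $\lambda$ and $\omega$ slightly) whose magnetic reduction is an extension of $(N,H,d(\rho\omega))$. Since simplicity is an open condition, this is possible. On $N_1$ the rescaling $\Psi$ is a diffeomorphism between the interior influx sets. Moreover, functions of the form $I(u)$ with $\supp u\subset N$ vanish near the glancing region of $\partial N_1$, so the pullback estimate only needs to hold away from corners.
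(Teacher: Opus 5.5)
Your proposal is correct and follows the same route as the paper: you identify $I_{\MP}u$ with $(I_\mu(u/c))\circ\Phi$ via \cite{MT24}*{Proposition~3.7}, use that pullback by the fibrewise rescaling of the influx boundaries is an $H^{1/2}$-isomorphism, and invoke Theorem~\ref{thm:stability-magnetic-xray} for the reduced magnetic system, which is simple by \cite{AZ15} and hence has injective ray transform. Your version is slightly more careful than the written proof in three places the paper leaves implicit: you track the weight explicitly by applying the magnetic estimate to $u/c$ and using $\|u\|_{L^2}\le\|c\|_{L^\infty}\|u/c\|_{L^2}$, whereas the paper applies it to $F=I_\mu u$ and handles the weight $1/(2k+\rho^2/\lambda)$ only implicitly; you justify injectivity of $I_\mu$ on $L^2$; and you reconcile the $\partial_+SN_1$ versus $\partial_+S^kN$ norms.
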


\begin{proof}
Since the $\MP$-system is simple, its magnetic reduction is simple too. Hence, $I_{\mu}$ is injective by the results in \cite{DPSU07}. Thus, Theorem \ref{thm:stability-magnetic-xray} is valid. Let us denote the associated magnetic ray transform by $I_{\mu}$. Consider the smooth diffeomorphism
\[
    \Phi:\partial_{+}S^{k}N\to \partial_{+}SN,
    \qquad
    \Phi(X,V_x)
    =
    \left(X,\frac{V_{x}}{2k+\rho^{2}/\lambda}\right).
\]
By \cite{MT24}*{Proposition~3.7}, $I_{\mathcal{MP}}u=(I_\mu (u/(2k+\rho^{2}/\lambda)))\circ\Phi$. Furthermore, since $\Phi$ is a smooth diffeomorphism between compact smooth manifolds with boundary, pullback by $\Phi$ is an isomorphism. Hence there exists a constant $C_\Phi>0$ such that
\[
    \|F\|_{H^{1/2}(\partial_{+}SN)}
    \leq
    C_\Phi
    \|F\circ\Phi\|_{H^{1/2}(\partial_{+}S^{k}N)}
\]
for every $F\in H^{1/2}(\partial_+SN)$. Applying this with $F=I_\mu u$ and using that $N$ is compact gives
\[
    \|I_\mu u\|_{H^{1/2}(\partial_+SN)} \leq C_\Phi
    \|(I_\mu u)\circ\Phi\|_{H^{1/2}(\partial_{+}S^{k}N)}=C_\Phi
    \|I_{\mathcal{MP}}u\|_{H^{1/2}(\partial_{+}S^{k}N)},
\]
and the result follows from Theorem \ref{thm:stability-magnetic-xray}.
\end{proof}

\end{document}